\newcommand{\Arrow}[1]{%
	\parbox{#1}{\tikz{\draw[->](0,0)--(#1,0);}}
}
\newcommand{\rsss}{\rotatebox[]{90}{$\boxminus$}\kern-0.7em{\mathrel{\raisebox{.2ex}{\Arrow{.35cm}}}}\!\!}
\newcommand{\csss}{\text{$\boxminus\kern-0.655em{\mathrel{\raisebox{-.2ex}{\rotatebox[]{-90}{\Arrow{.34cm}}}}}$\ }}
\newtheorem{remark}[theorem]{ Remark}
\newtheorem{exam}[theorem]{\bf Example}
\newcommand{\ba}{\begin{array}}
\newcommand{\ea}{\end{array}}
\newcommand{\be}{\begin{equation}}
\newcommand{\ee}{\end{equation}}
\newcommand{\beano}{\begin{eqnarray*}}
\newcommand{\eeano}{\end{eqnarray*}}
\def\lam{\lambda}
\title{Linearizations of Quadratic Two Parameter
matrix Polynomial Via Newton Basis}
\author{niku.namita }
\date{July 2024}
\author{
Avisek Bist \thanks{Department of Mathematics, Sikkim University, Sikkim-737102, India, ({\tt avisek.bista@gmail.com})}  \and
Namita Behera \thanks{Corresponding author\\ Department of Mathematics, Sikkim University, Sikkim-737102, India, ({\tt nbehera@cus.ac.in}, niku.namita@gmail.com).} 
}
\begin{document}

\maketitle

\begin{abstract}
Given a quadratic two-parameter matrix polynomial in Newton basis $Q_{N}(\lam, \mu)$, we construct a vector space of linear two-parameter matrix polynomials and identify a set of linearizations which lie in the vector space. We also describe construction of each of these linearizations. 
\end{abstract}

\begin{keywords} Matrix polynomial, quadratic two parameter matrix polynomial, eigenvalue, Newton Bases, matrix pencil, linearization.
\end{keywords}

\begin{AMS}
65F15, 15A18, 65D05, 41A10, 47J10, 15A69, 15A22
\end{AMS}

\section{Introduction}
We consider two-parameter quadratic matrix polynomials of the form
\begin{equation}
Q(\lambda, \mu) = \lam^2 A_{20}+ \lam\mu A_{11} + \mu^2 A_{02}+\lam A_{10}+ \mu A_{01}+A_{00},
\label{eq:quad_poly}
\end{equation}
where $\lambda, \mu$ are scalars and the coefficient matrices are real or complex matrices of order $n \times n$. If $(\lambda,\mu) \in \mathbb{C} \times \mathbb{C}$ and nonzero $x \in \mathbb{C}^n$ satisfy $Q(\lambda,\mu)x = 0$, then $x$ is said to be an eigenvector of $Q(\lambda, \mu)$ corresponding to the eigenvalue $(\lambda, \mu)$. 

The classical approach to solving spectral problems for matrix polynomials is to first perform a \textit{linearization}, that is, to transform the given polynomial into a linear matrix polynomial, and then work with this linear polynomial (see \cite{VA72, VBKha97, TKosir94, VNKub98, AMBPle10} and the references therein). Therefore, given a quadratic two-parameter matrix polynomial $Q(\lambda, \mu)$, we seek linear two-parameter matrix polynomials 
\[
L(\lambda, \mu) = \lambda L_{1} + \mu L_{2} + L_{0},
\]
called \textit{linearizations}, which have the same spectral properties as $Q(\lambda, \mu)$.

In \cite{BA12} the pencil $C(\lam, \mu)$ is given by 
\begin{equation}\label{cfq}
\left(
\lambda 
\begin{pmatrix}
A_{20} & A_{11} & 0 \\
0 & 0 & 0 \\
0 & 0 & I
\end{pmatrix}
+ \mu 
\begin{pmatrix}
0 & A_{02} & 0 \\
0 & 0 & I \\
0 & 0 & 0
\end{pmatrix}
+ 
\begin{pmatrix}
A_{10} & A_{01} & A_{00} \\
0 & -I & 0 \\
-I & 0 & 0
\end{pmatrix}
\right)
\begin{pmatrix}
x_{10} \\
x_{01} \\
x_{00}
\end{pmatrix}
= 0
\end{equation}

\noindent
\hfill$\underbrace{\hspace{3.8cm}}_{\displaystyle L_{1}}$\hfill
$\underbrace{\hspace{3.2cm}}_{\displaystyle L_{2}}$\hfill
$\underbrace{\hspace{3.2cm}}_{\displaystyle L_{0}}$

\vspace{.5cm}

\noindent
We refer $C(\lam, \mu)$ as companion pencil/standard pencil of $Q(\lam, \mu). $
Observe that
\[
\begin{pmatrix}
x_{10} \\
x_{01} \\
x_{00}
\end{pmatrix}
=
\begin{pmatrix}
\lambda x \\
\mu x \\
x
\end{pmatrix}
=
\begin{pmatrix}
\lambda \\
\mu \\
1
\end{pmatrix}
\otimes x.
\]
We denote $\Lambda := \begin{pmatrix} \lambda \\ \mu \\ 1 \end{pmatrix}$. Thus, $x$ is the eigenvector corresponding to an eigenvalue $(\lambda, \mu)$ of $Q(\lambda, \mu)$ if and only if 
\[
C(\lambda, \mu) w = 0,
\quad \text{where} \quad
w = \Lambda \otimes x,
\]
and $C(\lambda, \mu) = \lambda L_{1} + \mu L_{2} + L_{0}$. That is, $w$ is the eigenvector corresponding to an eigenvalue $(\lambda, \mu)$ of $C(\lambda, \mu)$.

One-parameter matrix polynomials have been a well-studied topic in numerical linear algebra \cite{TDM, effenberger2012chebyshev, HFPSorthop,  gohberg82, FGanttheorymatr98, mmmm06}. In particular, Mackey et al. \cite{mmmm06} extensively examined the one-parameter polynomial eigenvalue problem and developed vector spaces of linearizations by generalizing companion forms associated with one-parameter matrix polynomials. Following a similar line of reasoning, in \cite{BA12} they constructed a vector space of linear two-parameter matrix polynomials corresponding to a given quadratic two-parameter matrix polynomial. They provided a detailed characterization of each of these linear polynomials. 

Recently, in \cite{VPDSM18}, for a nonlinear eigenvalue problem of the form \( P(\lambda)x = 0 \), where \( x \neq 0 \) and \( P(\lambda) \) is a matrix polynomial expressed as
\[
P(\lambda) = \sum_{i=0}^k A_i \, \phi_i(\lambda),
\]
with \( B = \{\phi_i(\lambda)\}_{i=0}^k \) denoting a polynomial basis for the space of univariate scalar polynomials of degree at most \( k \). Classical examples of such bases include Chebyshev, Newton, Hermite, Lagrange, and Bernstein. Matrix polynomials expressed in these bases commonly arise either directly from applications or as approximations to more general nonlinear eigenvalue problems---see, for instance, \cite{defez2002matrix, effenberger2012chebyshev, guttel2014nleigs, vanbeeumen2013rational, vanbeeumen2015linearization} and references therein.

\begin{definition}\cite{BA12}
A $ln \times ln$ linear matrix polynomial 
\[
L(\lambda, \mu) = \lambda L_{1} + \mu L_{2} + L_{0}
\]
is a \textit{linearization} of an $n \times n$ matrix polynomial $Q(\lambda, \mu)$ if there exist matrix polynomials $P(\lambda, \mu)$ and $R(\lambda, \mu)$, whose determinants are non-zero constants independent of $\lambda$ and $\mu$, such that
\[
\begin{pmatrix}
Q(\lambda, \mu) & 0 \\
0 & I_{(l-1)n}
\end{pmatrix}
= P(\lambda, \mu) L(\lambda, \mu) R(\lambda, \mu).
\]
\end{definition}

It is shown in that $C(\lam, \mu)$ is a linearization of $Q(\lam, \mu). $ Other than this they have constructed two vector spaces linearizations for $(\lam, \mu)$ and its charactrizations.



The main contributions of this paper are as follows. 
We define
quadratic two-parameter matrix polynomials in the Newton basis and examine their linearizations.  Further, we study vector spaces linearizations of a quadratic two parameter matrix polynomial in Newton basis. Furthermore, we discuss structure preserving quadratic two parameter matrix polynomials.

In \cite{VPDSM18}, matrix polynomials expressed in the Newton basis, along with the associated polynomial eigenvalue problems, are discussed. In the next section, we define quadratic two-parameter matrix polynomials in the Newton basis and examine their linearizations.

\section{QTEP expressed in Newton Basis}
Consider the two-parameter quadratic eigenvalue problem (QTEP):
$Q(\lambda, \mu)x = 0,$
where
$$
Q(\lambda, \mu) = \sum_{i+j \leq 2} A_{ij} \lambda^i \mu^j,$$
with matrix coefficients $ A_{ij} \in \mathbb{C}^{n \times n} )$.

\subsection*{Newton Basis}
Let $\mathcal{A} = (\alpha_1, \alpha_2)$ and $\mathcal{B}=(\beta_1, \beta_2)$ be an ordered list of elements from $\mathbb{C}$, where the $\alpha$'s and $\beta$'s  need not be distinct, or numerically ordered in any way. Associated with such lists $\mathcal{A}$ and $\mathcal{B}$, we define the scalar polynomias as follows:   
Let \( \{n_0(\lambda), n_1(\lambda), n_2(\lambda)\} \) be the Newton basis in \(\lambda\), defined as:
\begin{align*}
n_0(\lambda) &= 1, \\
n_1(\lambda) &= \lambda - \alpha_1, \\
n_2(\lambda) &= (\lambda - \alpha_1)(\lambda - \alpha_2),
\end{align*}
and similarly, define \( \{m_0(\mu), m_1(\mu), m_2(\mu)\} \) in \(\mu\) as
\begin{align*}
m_0(\mu) &= 1, \\
m_1(\mu) &= \mu - \beta_1, \\
m_2(\mu) &= (\mu - \beta_1)(\mu - \beta_2).
\end{align*}

Then, the Newton basis for bivariate polynomials up to total degree $2$ consists of the six functions:
\[
\mathcal{N} =
\begin{pmatrix}
1 \\
n_1(\lambda) \\
m_1(\mu) \\
n_2(\lambda) \\
n_1(\lambda) m_1(\mu) \\
m_2(\mu)
\end{pmatrix} = \begin{pmatrix}
1 \\
(\lambda- \alpha_1) \\
(\mu-\beta_1) \\
(\lambda-\alpha_1)(\lambda-\alpha_2) \\
(\lambda - \alpha_1) (\mu -\beta_1) \\
(\mu- \beta_1) (\mu- \beta_2) 
\end{pmatrix}.
\]

We express the QTEP in the Newton basis as:
\begin{align}\label{qnb}
Q_N(\lambda, \mu) &= \sum_{i+j \leq 2} A_{ij} n_i(\lambda) m_j(\mu) \nonumber \\ & =   A_{20} n_{2}(\lam)+ A_{11}n_{1}(\lambda)m_{1}(\mu) + A_{02} m_{2}(\mu)+A_{10} n_1(\lambda)+ A_{01}m_1(\mu)+A_{00},
\end{align}
where \( A_{ij} \) are the coefficient matrices in the Newton basis.
Now, we aim to find a linearization of the two-parameter quadratic eigenvalue problem (QTEP):
\[
Q_N(\lambda, \mu)x = 0, 
\]
such that the eigenvalues \((\lambda, \mu)\) are preserved and the representation is structured in the Newton basis.

That is, we seek a linear two-parameter eigenvalue problem (LTEP) of the form: $
L_{N}(\lambda, \mu) w  = 0,$
such that:
\[
L_{N}(\lambda, \mu)w = 0 \quad \Leftrightarrow \quad Q_N(\lambda, \mu)x= 0.
\]

\subsection{Construction of pencils in Newton bases.}
Define scalar polynomials with $\mathcal{A} $ and $\mathcal{B}$ namely, $\{\gamma_1(\lambda), \gamma_2(\lambda)\}$ and $\{\widetilde{\gamma}_1(\mu), \widetilde{\gamma}_2(\mu)\}$ defined by $\gamma_1(\lambda) = \lambda-\alpha_1,  \gamma_2(\lambda) = \lambda-\alpha_2$ and $\widetilde{\gamma}_1(\mu) = \mu-\beta, \widetilde{\gamma}_2(\mu) = \mu-\beta$ respectively.

Now, the newton polynomilas can alternatively defined in the $\gamma_i$'s and $\widetilde{\gamma}_i$'s via multiplicative recurrence relation
\begin{align*}
n_0(\lambda) &= 1, \\
n_1(\lambda) &= \lambda - \alpha_1, \\
n_2(\lambda) &= (\lambda - \alpha_1)(\lambda - \alpha_2)= 
n_{1}(\lambda)\gamma_{2}(\lambda),
\end{align*}
and similarly, define \( \{m_0(\mu), m_1(\mu), m_2(\mu)\} \) in \(\mu\) as
\begin{align*}
m_0(\mu) &= 1, \\
m_1(\mu) &= \mu - \beta_1, \\
m_2(\mu) &= (\mu - \beta_1)(\mu - \beta_2) =m_{1}(\mu)\widetilde{\gamma}_{2}(\mu). 
\end{align*}

Define matrices   \begin{align} \label{gamma2}
\Gamma_2(\lambda) &= \begin{pmatrix}
   \gamma_{2}(\lambda)\otimes I_n & & \\
   & \gamma_{1}(\lambda)\otimes I_n& & \\
   & & & \gamma_{1}(\lambda) \otimes I_n\\
\end{pmatrix} \\&= \begin{pmatrix}
   (\lambda-\alpha_2)\otimes I_n & & \\
   & (\lambda-\alpha_1)\otimes I_n& & \\
   & & & (\lambda-\alpha_1)\otimes I_n\\
\end{pmatrix}    
\end{align} and 
\begin{align}\label{gamma2t}
\widetilde{\Gamma}_2(\mu) &= \begin{pmatrix}
   \widetilde{\gamma}_{1}(\mu)\otimes I_n & & \\
   & \widetilde{\gamma}_{2}(\mu)\otimes I_n& & \\
   & & & \widetilde{\gamma}_{1}(\mu)\otimes I_n\\
\end{pmatrix} \\&= \begin{pmatrix}
   (\mu-\beta_1)\otimes I_n & & \\
   & (\mu-\beta_2)\otimes I_n& & \\
   & & & (\mu-\beta_1)\otimes I_n\\
\end{pmatrix}. \end{align}
Consider QTEP in the Newton basis as:
\[
Q_N(\lambda, \mu) =  A_{20} n_{2}(\lam)+ A_{11}n_{1}(\lambda)m_{1}(\mu) + A_{02} m_{2}(\mu)+A_{10} n_1(\lambda)+ A_{01}m_1(\mu)+A_{00},
\]
where \( A_{ij} \) are the coefficient matrices in the Newton basis. 
Since $n_1(\lam) = \gamma_1{(\lambda)}m_{1}(\mu) = \widetilde{\gamma}_{1}(\mu)$,  and $n_{2}(\lam) = n_{1}(\lam) = \gamma_{2}(\lam)$, $m_{2}(\mu) = m_{1}(\mu) \widetilde{\gamma}_2(\mu)$, we have
$$Q_N(\lambda, \mu) = A_{20} n_{1}(\lam)\gamma_{2}(\lam)+ A_{11}n_{1}(\lambda)m_{1}(\mu) + A_{02} m_{1}(\mu)\widetilde{\gamma}_2(\mu)+A_{10} n_1(\lambda)+ A_{01}m_1(\mu)+A_{00}.$$

In \cite{BA12}, it has been studied the vector spaces linearizations for a given $Q(\lam, \mu)$ in monomila basis. In the next section, we study vector spaces linearizations of a quadratic two parameter matrix polynomial in Newton basis.

\section{Vector Spaces Linearizations}
Consider $Q(\lam, \mu)$ is given in (\ref{eq:quad_poly}) and the pencil $L(\lam, \mu) = \lam L_1+\mu L_2+ L_0$ of $Q(\lam, \mu)$ given in \cite{BA12}. In \cite{BA12}, they introduced the vector spaces linearizations of $Q(\lam, \mu).$ For this they introduced the notation
$$\mathcal{\gamma}_{Q}=\{v \otimes Q(\lam, \mu): v \in \mathbb{C}^3\}$$ and 
and define $$\mathbb{L}(Q(\lam, \mu)):= \{L(\lam, \mu): L(\lam, \mu)(\Lambda \otimes I_n) \in \mathcal{\gamma}_{Q}\},$$ where   $\Lambda =\begin{pmatrix}
    \lam \\
    \mu\\
    1
\end{pmatrix}$. Further, it is shown in \cite{BA12} that $\mathbb{L}(Q(\lam, \mu))$ is a vector space and almost all the pencils of $\mathbb{L}(Q(\lam, \mu))$ are linearizations. 

In this paper, we discuss vector spaces linearizations in terms of Newton basis. 

Consider Q2EP in the Newton basis as:
\[
Q_N(\lambda, \mu) =  A_{20} n_{2}(\lam)+ A_{11}n_{1}(\lambda)m_{1}(\mu) + A_{02} m_{2}(\mu)+A_{10} n_1(\lambda)+ A_{01}m_1(\mu)+A_{00},
\]
where \( A_{ij} \) are the coefficient matrices in the Newton basis.
Define $$\mathcal{\gamma}_{Q_N}=\{v \otimes Q_N(\lam, \mu): v \in \mathbb{C}^3\}$$ and the space 
$$\mathcal{N}(Q_N(\lam, \mu)):= \{L(\lam, \mu): L(\lam, \mu)(N \otimes I_n) \in \mathcal{\gamma}_{Q_N}\}, $$
where   $N(\lambda, \mu) =\begin{pmatrix}
    n_1(\lam) \\
   m_1(\mu)\\
    1
\end{pmatrix} = \begin{pmatrix}
    \lam -\alpha_1 \\
   \mu-\beta_1\\
    1
\end{pmatrix}$. 
Note that $\mathcal{N}(Q_N(\lam, \mu))$ is a vector space from the definition and the properties of Kronecker product.

\begin{proposition}
Let $Q_N(\lam, \mu)$ be an $n \times n$ Q2EVP. Then $\mathbb{L}(Q)$ and $\mathcal{N}(Q_N(\lam, \mu))$ are isomorphic as vector spaces.    
\end{proposition}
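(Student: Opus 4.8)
The plan is to produce an explicit linear bijection between the two spaces, induced by the constant invertible $3\times 3$ matrix that converts the ``monomial'' vector $\Lambda=\begin{pmatrix}\lambda\\ \mu\\ 1\end{pmatrix}$ into the ``Newton'' vector $N=N(\lambda,\mu)=\begin{pmatrix}\lambda-\alpha_1\\ \mu-\beta_1\\ 1\end{pmatrix}$. Writing
\[
T=\begin{pmatrix}1 & 0 & -\alpha_1\\ 0 & 1 & -\beta_1\\ 0 & 0 & 1\end{pmatrix},\qquad
T^{-1}=\begin{pmatrix}1 & 0 & \alpha_1\\ 0 & 1 & \beta_1\\ 0 & 0 & 1\end{pmatrix},
\]
one has $N=T\Lambda$ and $\det T=1$. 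Throughout I take $Q$ and $Q_N$ to denote the same $n\times n$ matrix-valued function, written in the monomial and Newton bases respectively (one obtains $Q$ from $Q_N$ just by expanding the Newton polynomials), so that $\gamma_{Q}=\gamma_{Q_N}$.

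First I would record the Kronecker mixed-product identity
\[
(T\otimes I_n)(\Lambda\otimes I_n)=(T\Lambda)\otimes I_n=N\otimes I_n,\qquad\text{equivalently}\qquad \Lambda\otimes I_n=(T^{-1}\otimes I_n)(N\otimes I_n).
\]
Then I would define
\[
\Phi:\mathbb{L}(Q)\longrightarrow \mathcal{N}(Q_N),\qquad \Phi(L)(\lambda,\mu):=L(\lambda,\mu)\,(T^{-1}\otimes I_n),
\]
i.e.\ right multiplication by the constant invertible $3n\times 3n$ matrix $T^{-1}\otimes I_n$. Since this matrix is independent of $(\lambda,\mu)$, $\Phi(L)=\lambda\,L_1(T^{-1}\otimes I_n)+\mu\,L_2(T^{-1}\otimes I_n)+L_0(T^{-1}\otimes I_n)$ is again a pencil of the same form and size, and $\Phi$ is evidently $\C$-linear.

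The only step that needs a little care is checking that $\Phi$ really lands in $\mathcal{N}(Q_N)$ (and not merely in the ambient space of $3n\times 3n$ pencils) and that it is invertible. If $L\in\mathbb{L}(Q)$ then $L(\Lambda\otimes I_n)=v\otimes Q$ for some $v\in\C^3$, hence
\[
\Phi(L)(N\otimes I_n)=L(T^{-1}\otimes I_n)(N\otimes I_n)=L(\Lambda\otimes I_n)=v\otimes Q=v\otimes Q_N\in\gamma_{Q_N},
\]
so $\Phi(L)\in\mathcal{N}(Q_N)$. The symmetric computation shows that $\Psi(M):=M(T\otimes I_n)$ maps $\mathcal{N}(Q_N)$ into $\mathbb{L}(Q)$, and because $(T^{-1}\otimes I_n)(T\otimes I_n)=I_{3n}$ one has $\Psi\circ\Phi=\mathrm{id}$ and $\Phi\circ\Psi=\mathrm{id}$. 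Thus $\Phi$ is the required vector-space isomorphism.

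Finally I would add a remark that the isomorphism is in any case forced by dimensions: writing $L_k=\begin{pmatrix}L_k^{(1)} & L_k^{(2)} & L_k^{(3)}\end{pmatrix}$ in $3n\times n$ column blocks and matching the six bivariate coefficients of $L(\Lambda\otimes I_n)$ (respectively $L(N\otimes I_n)$) with those of $v\otimes Q$ (respectively $w\otimes Q_N$) shows that both spaces have dimension $9n^2+3$ for every nonzero $Q$, independently of the chosen basis. I would nevertheless keep the explicit map $\Phi$, since it is canonical and can be reused to transport the linearizations of $Q_N$ themselves; the (mild) obstacle is really just the bookkeeping in the well-definedness check together with being explicit about the convention $Q=Q_N$.
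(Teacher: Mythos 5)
Your proof is correct and follows essentially the same route as the paper: both use the constant change-of-basis matrix taking $\Lambda$ to $N$ (your explicit $T$ is the paper's $S$) and define the isomorphism as right multiplication by $S^{-1}\otimes I_n$, verifying that the right ansatz identity is preserved. Your version is in fact slightly more careful, since you write $T$ out explicitly and state the convention $Q=Q_N$ (as functions) needed to conclude $v\otimes Q=v\otimes Q_N\in\gamma_{Q_N}$, a point the paper leaves implicit.
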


\begin{proof}
Note that $\Lambda(\lam, \mu) = \{\lam, \mu, 1\}$ and $N(\lam, \mu)= \{\lam-\alpha_1, \mu-\beta, 1\}$    and both bases for $\mathcal{P}_2$ implies that there exists a nonsingular constant change of basis $S$ st $S \Lambda(\lam, \mu) = N(\lam, \mu).$ Define a map
$$f: \mathbb{L}(Q)\longrightarrow \mathcal{N}(Q_N) \text{   by   } L(\lam, \mu) \longmapsto L(\lam, \mu)(S^{-1}\otimes I_n).$$

Suppose $L(\lam, \mu) \in \mathbb{L}(Q)$ with right ansatz vector $v$. Then
$$L(\lambda, \mu) \cdot (\Lambda \otimes I_n) = v \otimes Q(\lambda, \mu)
\;\Leftrightarrow\;
L(\lambda, \mu) \cdot (S^{-1} \otimes I_n) \cdot (S \otimes I_n) \cdot (\Lambda(\lambda, \mu) \otimes I_n) = v \otimes Q(\lambda, mu) $$$$
\;\Leftrightarrow\;
L(\lambda, \mu) \cdot (S^{-1} \otimes I_n) \cdot (N(\lambda) \otimes I_n) = v \otimes Q(\lambda, \mu).$$
Therefore, $ L(\lambda, \mu) \cdot (S^{-1} \otimes I_n)  \in N(Q_N)$ with right ansatz vector $v$. It shows that $f$ is a well-defined map from $\mathbb{L}(Q)$ to $\mathcal{N}(Q_N)$. It is easy to check that $f$ is also a linear map.
Next, by a completely analogous argument, one can show that the map
$$g : \mathcal{N}(Q_N) \longrightarrow \mathbb{L}(Q) \text{    by    }  T(\lam, \mu) \longmapsto (S \otimes I_n)$$
is also well defined and linear. Hence $g$ is the inverse mapping of $f$, showing that $f$ is a linear isomorphism between $\mathbb{L}(Q)$ and $\mathcal{N}(Q_N)$.

\end{proof}

Given that $\mathbb{L}$ and $\mathcal{N}$ are isomorphic as vector spaces, one might wonder why it is worth bothering with the $\mathcal{N}$ space at all, since so much is already known about $\mathbb{L}$ in \cite{BA12}. However, when $Q(\lambda, \mu)$ is expressed in the Newton basis as in equation (\ref{qnb}), it turns out to be more natural to look for linearizations in the spaces $\mathcal{N}(Q_N)$, rather than in either $\mathbb{L}(P)$ 

In particular, pencils in $\mathcal{N}$ is much easier to construct from the matrix coefficients of $Q_N(\lambda, \mu)$ than are the pencils in the $\mathbb{L}(\lam, \mu)$ space, especially if the pencils do not need to be block-symmetric.

Define $$\widetilde{\Lambda}(\lam, \mu) = \begin{pmatrix}
    \lambda^2 \\
    \lam \mu\\
    \mu^2\\
    \lambda\\
    \mu\\
    1  
\end{pmatrix} \text{  and   } \widetilde{N}(\lam, \mu) = \begin{pmatrix}
    n_2(\lambda)\\
    n_1(\lam)m_1 (\mu)\\
    m_2(\mu)\\
    n_1(\lambda)\\
    m_1(\mu\\
    n_{0} 
\end{pmatrix}$$

\begin{lemma}\label{rbaqaqn}
Let \[
Q_N(\lambda, \mu) =  A_{20} n_{2}(\lam)+ A_{11}n_{1}(\lambda)m_{1}(\mu) + A_{02} m_{2}(\mu)+A_{10} n_1(\lambda)+ A_{01}m_1(\mu)+A_{00},
\] be an $n \times n$ matrix polynomial of grade 2 in a Newton basis, and define the partner polynomial $Q(\lambda, \mu) = \lam^2 A_{20}+ \lam\mu A_{11} + \mu^2 A_{02}+\lam A_{10}+ \mu A_{01}+A_{00},$, using the same coefficients $A_{ij}$ as in $Q_N(\lambda, \mu)$. Then for matrices Then for matrices $A_1, A_2, A_3 \in  \mathbb{C}^{3n \times 3n},$ we have 
\begin{equation}
  L(\lam, \mu)\cdot (\Lambda \otimes I_n) = v \otimes Q(\lambda, \mu) \iff (A_1 \Gamma_2 + A_2 \widetilde{\Gamma}_2 +A_3) (N \otimes I_n) = v \otimes Q_N(\lambda, \mu)   
\end{equation}
where $\Gamma_2$ and $\widetilde{\Gamma}_2$ are as in (\ref{gamma2}) and (\ref{gamma2t}), respectively. Moreover, the pencils $L(\lam, \mu) = \lam A_1+\mu A_2+A_3 $ and $A_1 \Gamma_2 + A_2 \widetilde{\Gamma}_2 +A_3$ share the same
ansatz vector $v$.   
That is, $$\lam A_1+\mu A_2+A_3 \in L(Q) \iff A_1 \Gamma_2 + A_2 \widetilde{\Gamma}_2 +A_3 \in \mathcal{N}(Q_N). $$
\end{lemma}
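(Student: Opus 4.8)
The plan is to reduce the equivalence to a single algebraic identity relating the two "vectors of basis functions'' $\Lambda \otimes I_n$ and $N \otimes I_n$, and then to track how the coefficient matrices transform. The key observation is that $\widetilde{N}(\lambda,\mu)$ and $\widetilde{\Lambda}(\lambda,\mu)$ are both bases of the space $\mathcal{P}_2$ of bivariate polynomials of total degree at most $2$, so there is a fixed nonsingular $6 \times 6$ change-of-basis matrix $M$ with $\widetilde{N} = M\,\widetilde{\Lambda}$; equivalently, since $Q(\lambda,\mu)$ and $Q_N(\lambda,\mu)$ use the \emph{same} coefficient matrices $A_{ij}$ just paired against the monomial versus Newton basis functions, one has the pair of dual identities that the "$6n$-vector'' built from $Q$ against $\widetilde\Lambda \otimes I_n$ equals the one built from $Q_N$ against $\widetilde N \otimes I_n$ after the corresponding block transformation. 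This is exactly the grade-$2$, two-variable analogue of the one-parameter statement in \cite{VPDSM18}.

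First I would record the elementary column identities
\[
\Gamma_2(\lambda)\,(N(\lambda,\mu)\otimes I_n) \;=\; \Lambda_{\mathrm{top}} , \qquad
\widetilde{\Gamma}_2(\mu)\,(N(\lambda,\mu)\otimes I_n) \;=\; \Lambda_{\mathrm{mid}},
\]
obtained by multiplying out the diagonal blocks: because $n_2(\lambda) = n_1(\lambda)\gamma_2(\lambda)$, $m_2(\mu) = m_1(\mu)\widetilde\gamma_2(\mu)$, and $n_1(\lambda)m_1(\mu) = \gamma_1(\lambda)\cdot m_1(\mu)$ etc. (this is the "multiplicative recurrence'' spelled out just before \eqref{gamma2}), each diagonal entry of $\Gamma_2$ or $\widetilde\Gamma_2$ applied to the corresponding block $n_1(\lambda)x$ or $m_1(\mu)x$ of $N\otimes I_n$ produces precisely a degree-$2$ Newton monomial $n_2(\lambda)x$, $n_1(\lambda)m_1(\mu)x$, or $m_2(\mu)x$. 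Concretely this shows $(A_1\Gamma_2 + A_2\widetilde\Gamma_2 + A_3)(N\otimes I_n)$ is a \emph{linear} combination, with blocks of $A_1,A_2,A_3$ as coefficients, of the six quantities $n_2(\lambda)I_n,\ n_1(\lambda)m_1(\mu)I_n,\ m_2(\mu)I_n,\ n_1(\lambda)I_n,\ m_1(\mu)I_n,\ I_n$ — the same six coefficients that appear when one expands $L(\lambda,\mu)(\Lambda\otimes I_n) = (\lambda A_1 + \mu A_2 + A_3)(\Lambda\otimes I_n)$ against $\lambda^2 I_n, \lambda\mu I_n, \mu^2 I_n, \lambda I_n, \mu I_n, I_n$. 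In other words, with respect to the ordered basis $\widetilde\Lambda$ (resp. $\widetilde N$) the left-hand (resp. right-hand) expression has the \emph{identical} block-coefficient array, call it $\Phi(A_1,A_2,A_3)\in\mathbb{C}^{3n\times 6n}$.

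Next I would observe that $v\otimes Q(\lambda,\mu) = (I_3\otimes \widehat{A})(\,\widetilde\Lambda\otimes I_n\,)$ and $v\otimes Q_N(\lambda,\mu) = (I_3\otimes\widehat A)(\,\widetilde N\otimes I_n\,)\cdot(\text{same }v)$, where $\widehat A = [\,A_{20}\ A_{11}\ A_{02}\ A_{10}\ A_{01}\ A_{00}\,]$ is the common $n\times 6n$ coefficient matrix and the $v$-dependence enters through a fixed $3\times 6\mapsto$ block structure. Both sides of each desired equivalence are therefore of the form "$\Psi(\widetilde\Lambda\otimes I_n) = 0$'' versus "$\Psi(\widetilde N\otimes I_n)=0$'' for the \emph{same} constant matrix $\Psi = \Phi(A_1,A_2,A_3) - (\text{block form of }v)\cdot\widehat A$; and since $\widetilde N = M\widetilde\Lambda$ with $M$ invertible (equivalently, since each of $\widetilde\Lambda\otimes I_n$ and $\widetilde N\otimes I_n$ has full column rank $6n$ as a matrix of polynomials), $\Psi(\widetilde\Lambda\otimes I_n)\equiv 0 \iff \Psi = 0 \iff \Psi(\widetilde N\otimes I_n)\equiv 0$. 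Chaining these gives both the displayed equivalence and the statement that the ansatz vector $v$ is preserved, and the final "$\lambda A_1 + \mu A_2 + A_3 \in \mathbb{L}(Q) \iff A_1\Gamma_2 + A_2\widetilde\Gamma_2 + A_3 \in \mathcal N(Q_N)$'' is then just the definitions of the two spaces.

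The main obstacle is purely bookkeeping: verifying cleanly that applying $\Gamma_2(\lambda)$ and $\widetilde\Gamma_2(\mu)$ to $N\otimes I_n$ and collecting terms reproduces \emph{exactly} the same block array as expanding $(\lambda A_1 + \mu A_2 + A_3)(\Lambda\otimes I_n)$ in the monomial basis — i.e. that the off-diagonal placements in \eqref{gamma2}–\eqref{gamma2t} (note the asymmetric last blocks $\gamma_1$, $\widetilde\gamma_1$) are precisely the ones making the middle Newton coordinate $n_1(\lambda)m_1(\mu)$ come out right, since $\lambda\mu$ is "shared'' between the $\lambda$- and $\mu$-parts. One must also be slightly careful that the identities $\gamma_1(\lambda)m_1(\mu) = n_1(\lambda)m_1(\mu)$ and $\widetilde\gamma_1(\mu)n_1(\lambda) = n_1(\lambda)m_1(\mu)$ are used consistently. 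Once that correspondence of arrays is nailed down, the rest is the invertible change of basis $M$ and the full-column-rank argument, which are routine.
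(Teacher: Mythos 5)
Your proposal is correct and follows essentially the same route as the paper's proof: both expand $(\lambda A_1+\mu A_2+A_3)(\Lambda\otimes I_n)$ and $(A_1\Gamma_2+A_2\widetilde\Gamma_2+A_3)(N\otimes I_n)$ into one and the same constant block-coefficient array paired against $\widetilde\Lambda\otimes I_n$ and $\widetilde N\otimes I_n$ respectively, strip the basis factor by linear independence (the paper invokes Lemma 3.3 of \cite{VPDSM18} for exactly the full-column-rank step you describe), and then reattach the Newton basis vector. Your explicit verification that the diagonal placements in $\Gamma_2$ and $\widetilde\Gamma_2$ reproduce $n_2$, $n_1m_1$, $m_2$ correctly is precisely the computation the paper records as $\Gamma_2\cdot N$ and $\widetilde\Gamma_2\cdot N$.
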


\begin{proof}
Let $\lam A_1+\mu A_2+A_3 \in L(Q).$ Then there exists a right ansatz vector $v \in \mathbb{C}^3$ such that $(\lam A_1+\mu A_2+A_3) \cdot (\Lambda \otimes I_n) = v \otimes Q(\lam, \mu).$  By the properties of block matrix multiplication and Kronecker product we have the following:
$$(\lam A_1+\mu A_2+A_3) \cdot (\Lambda \otimes I_n) = v \otimes Q(\lam, \mu)$$
$$\lam A_1 \cdot (\Lambda \otimes I_n) + \mu A_2 \cdot (\Lambda \otimes I_n) + A_3 \cdot (\Lambda \otimes I_n) = (v.1)\otimes \left([A_{20} \,\, A_{11}\,\, A_{02} \,\, A_{10}\,\, A_{01}\,\, A_{00}] . (\widetilde{\Lambda}(\lam, \mu) \otimes I_n)\right)$$
$$\left[\begin{pmatrix}
 X_{11} & X_{12} & 0 & X_{13} & 0 & 0   
\end{pmatrix} + \begin{pmatrix}
 0 & Y_{11} & Y_{12} & 0 & Y_{13} & 0    
\end{pmatrix} + \begin{pmatrix}
 0 & 0 & 0 &Z_{11} & Z_{12} &  Z_{13}    
\end{pmatrix} \right](\widetilde{\Lambda}(\lam, \mu) \otimes I_n) $$ $$ =  \left( v\otimes [A_{20} \,\, A_{11}\,\, A_{02} \,\, A_{10}\,\, A_{01}\,\, A_{00}]\right) . (\widetilde{\Lambda}(\lam, \mu) \otimes I_n)$$
$$\left[\begin{pmatrix}
 X_{11} & X_{12} & 0 & X_{13} & 0 & 0   
\end{pmatrix} + \begin{pmatrix}
 0 & Y_{11} & Y_{12} & 0 & Y_{13} & 0    
\end{pmatrix} + \begin{pmatrix}
 0 & 0 & 0 &Z_{11} & Z_{12} &  Z_{13}    
\end{pmatrix} \right]$$ $$ =  \left( v\otimes [A_{20} \,\, A_{11}\,\, A_{02} \,\, A_{10}\,\, A_{01}\,\, A_{00}]\right), \text{  follows from Lemma 3.3, } \cite{VPDSM18}.$$
$$\left[\begin{pmatrix}
 X_{11} & X_{12} & 0 & X_{13} & 0 & 0   
\end{pmatrix} + \begin{pmatrix}
 0 & Y_{11} & Y_{12} & 0 & Y_{13} & 0    
\end{pmatrix} + \begin{pmatrix}
 0 & 0 & 0 &Z_{11} & Z_{12} &  Z_{13}    
\end{pmatrix} \right] \cdot (\widetilde{N}(\lam, \mu))$$ $$ =  \left( v\otimes [A_{20} \,\, A_{11}\,\, A_{02} \,\, A_{10}\,\, A_{01}\,\, A_{00}]\right) \cdot \widetilde{N}(\lam, \mu). $$
Now it implies that 
$$(A_1 \Gamma_2 + A_2 \widetilde{\Gamma}_2 + A_3)(N \otimes I_n) = v \otimes Q_N(\lambda, \mu), $$
where $$\Gamma_2(\lambda) = \begin{pmatrix}
   \gamma_{2}(\lambda)\otimes I_n & & \\
   & \gamma_{1}(\lambda)\otimes I_n& & \\
   & & & \gamma_{1}(\lambda) \otimes I_n\\
\end{pmatrix} $$ and 
$$\widetilde{\Gamma}_2(\mu) = \begin{pmatrix}
   \widetilde{\gamma}_{1}(\mu)\otimes I_n & & \\
   & \widetilde{\gamma}_{2}(\mu)\otimes I_n& & \\
   & & & \widetilde{\gamma}_{1}(\mu)\otimes I_n\\
\end{pmatrix}$$ and 

$$\Gamma_2(\lambda) . N  = \begin{pmatrix}
    n_2(\lambda)\\
    n_1(\lam)m_1 (\mu)\\
    n_1(\lambda) 
\end{pmatrix} \,\,\, \widetilde{\Gamma}_2(\lambda) . N  = \begin{pmatrix}
    n_1(\lam)m_1 (\mu)\\
     m_2(\mu)\\
    m_1(\mu) 
\end{pmatrix}. $$
Thus we have $A_1 \Gamma_2 + A_2 \widetilde{\Gamma}_2 +A_3 \in \mathcal{N}(Q_N).$ To see the proof of the converse part, argument is just reversible. Hence proved.
\end{proof}

Now we show that given a $Q_N$ in Newton basis as in (\ref{qnb}), when pencils from $\mathcal{N}(Q_N)$ are linearizations.






Note that not all linear two-parameter matrix polynomials in the space $\mathbb{L}(Q(\lambda, \mu))$ are linearizations of $Q(\lambda, \mu)$. For example, any $L(\lambda, \mu) \in \mathbb{L}(Q(\lambda, \mu))$ corresponding to the ansatz vector $v = 0$ is not a linearization. Thus, given a quadratic two-parameter matrix polynomial $Q(\lambda, \mu)$, in \cite{BA12} it is  identified which $L(\lambda, \mu) \in \mathbb{L}(Q(\lambda, \mu))$ are linearizations [\cite{BA12}, Theorem 2.5].

For instance for the ansatz vector $v = \alpha e_1$, where $e_1 = [1 \; 0 \; 0]^T$ and $0 \ne \alpha \in \mathbb{C}$ it is shown in \cite{BA12} that pencils in $\mathbb{L}(Q(\lambda, \mu))$ are lineraizations.

\begin{theorem} \label{QAe1} \cite{BA12}
Let $
Q(\lambda, \mu) = \lambda^2 A_{20} + \lambda\mu A_{11} + \mu^2 A_{02} + \lambda A_{10} + \mu A_{01} + A_{00}$
be a quadratic two-parameter matrix polynomial with real or complex coefficient matrices of size $n \times n$. Suppose
$
L(\lambda, \mu) = \lambda \widetilde{A}_{1} + \mu \widetilde{A}_2 + \widetilde{A}_3 \in \mathbb{L}(Q(\lambda, \mu))$
is constructed with respect to the ansatz vector $v =  e_1$, where
\[
\widetilde{A}_1 = 
\begin{pmatrix}
 e_1 \otimes A_{20} & -Y_1 +  e_1 \otimes A_{11} & -Z_1 + e_1 \otimes A_{10}
\end{pmatrix},
\]
\[
\widetilde{A}_2 = 
\begin{pmatrix}
Y_1 &  e_1 \otimes A_{02} & -Z_2 +  e_1 \otimes A_{01}
\end{pmatrix},
\]
\[
\widetilde{A}_3 = 
\begin{pmatrix}
Z_1 & Z_2 & e_1 \otimes A_{00}
\end{pmatrix},
\]
and
\[
Y_1 = 
\begin{pmatrix}
Y_{11} \\
0 \\
0
\end{pmatrix}, \quad
Z_1 = 
\begin{pmatrix}
Z_{11} \\
Z_{21} \\
Z_{31}
\end{pmatrix}, \quad
Z_2 = 
\begin{pmatrix}
Z_{12} \\
Z_{22} \\
Z_{32}
\end{pmatrix} \in \mathbb{C}^{3n \times n},
\]
with
\[
\det \begin{pmatrix}
Z_{21} & Z_{22} \\
Z_{31} & Z_{32}
\end{pmatrix} \ne 0.
\]
Then $L(\lambda, \mu)$ is a linearization of $Q(\lambda, \mu)$.
\end{theorem}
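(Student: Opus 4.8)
The plan is to argue directly from the definition of linearization by producing explicit matrix-polynomial factors $P(\lambda,\mu)$ and $R(\lambda,\mu)$ of constant nonzero determinant that reduce $L(\lambda,\mu)$ to $\diag\big(Q(\lambda,\mu),\,I_{2n}\big)$, following the block-elimination strategy familiar from the one-parameter theory of \cite{mmmm06}. First I would substitute the given block forms of $\widetilde{A}_1,\widetilde{A}_2,\widetilde{A}_3$ into $L=\lambda\widetilde{A}_1+\mu\widetilde{A}_2+\widetilde{A}_3$ and compute $L\cdot(\Lambda\otimes I_n)$ directly: the terms $\pm\lambda\mu\,Y_1$, $\pm\lambda\,Z_1$ and $\pm\mu\,Z_2$ cancel and one is left with $L(\lambda,\mu)(\Lambda\otimes I_n)=e_1\otimes Q(\lambda,\mu)$, which also records that $L\in\mathbb L(Q)$ with ansatz vector $e_1$. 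Writing $L$ out by block columns, using that $e_1\otimes A$ is the $3n\times n$ block column with $A$ on top and zeros below, one gets
\[
L(\lambda,\mu)=
\begin{pmatrix}
\lambda A_{20}+\mu Y_{11}+Z_{11} & \lambda(A_{11}-Y_{11})+\mu A_{02}+Z_{12} & \lambda(A_{10}-Z_{11})+\mu(A_{01}-Z_{12})+A_{00}\\
Z_{21} & Z_{22} & -\lambda Z_{21}-\mu Z_{22}\\
Z_{31} & Z_{32} & -\lambda Z_{31}-\mu Z_{32}
\end{pmatrix}.
\]

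Next I would right-multiply by the unit-determinant pencil
\[
R_1(\lambda,\mu)=\begin{pmatrix} I_n & 0 & \lambda I_n\\ 0 & I_n & \mu I_n\\ 0 & 0 & I_n\end{pmatrix},
\]
which adds $\lambda$ times the first block column and $\mu$ times the second block column to the third; since that weighted sum of the three block columns is exactly $L(\lambda,\mu)(\Lambda\otimes I_n)=e_1\otimes Q$, the third block column of $LR_1$ becomes the block column with $Q$ on top and zeros below, so
\[
LR_1=\begin{pmatrix} G_1 & G_2 & Q\\ Z_{21} & Z_{22} & 0\\ Z_{31} & Z_{32} & 0\end{pmatrix},
\]
with $G_1=\lambda A_{20}+\mu Y_{11}+Z_{11}$ and $G_2=\lambda(A_{11}-Y_{11})+\mu A_{02}+Z_{12}$. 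This is the only place the hypothesis is used: the constant $2n\times2n$ block $M=\begin{pmatrix}Z_{21}&Z_{22}\\ Z_{31}&Z_{32}\end{pmatrix}$ is invertible because $\det M\neq0$. Left-multiplying $LR_1$ by $\diag(I_n,M^{-1})$ normalises the bottom two block rows to $\begin{pmatrix}I_{2n}&0\end{pmatrix}$; left-multiplying then by the block matrix whose first block row is $(\,I_n\ \ {-G_1}\ \ {-G_2}\,)$ and which is the identity elsewhere (determinant $1$, although a genuine matrix polynomial in $\lambda,\mu$) uses those identity blocks to annihilate $G_1$ and $G_2$, producing
\[
\begin{pmatrix} 0 & 0 & Q\\ I_n & 0 & 0\\ 0 & I_n & 0\end{pmatrix}.
\]
A block permutation of the columns carrying the order $(1,2,3)$ to $(3,1,2)$, of determinant $\pm1$, turns this into $\diag(Q,I_{2n})$.

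Collecting the left factors into $P(\lambda,\mu)$ and the right factors, namely $R_1$ followed by the column permutation, into $R(\lambda,\mu)$, yields $\diag(Q,I_{2n})=P(\lambda,\mu)\,L(\lambda,\mu)\,R(\lambda,\mu)$ with $\det R=\pm1$ and $\det P=(\det M)^{-1}\neq0$, both nonzero constants independent of $\lambda,\mu$; hence $L(\lambda,\mu)$ is a linearization of $Q(\lambda,\mu)$. I expect the only thing needing real care to be the bookkeeping: checking that every elementary factor introduced has determinant a \emph{nonzero constant} (in particular that the $G_1,G_2$-clearing factor, though $\lambda,\mu$-dependent, is permitted because the definition only constrains $\det P$ and $\det R$), and verifying that the cancellations in the first computation are precisely those forced by $v=e_1$ — the block shapes of $Y_1,Z_1,Z_2$ and the off-diagonal sign pattern in $\widetilde{A}_1,\widetilde{A}_2,\widetilde{A}_3$ are arranged exactly so that $L(\Lambda\otimes I_n)$ collapses to $e_1\otimes Q$. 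Everything else reduces to routine block row and column operations, and since the statement asserts only sufficiency, no converse or genericity argument is required.
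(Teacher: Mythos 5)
Your proof is correct, and it follows essentially the same block-elimination strategy the paper itself uses for the Newton-basis analogue (Theorem \ref{lqn}): verify the ansatz identity $L(\Lambda\otimes I_n)=e_1\otimes Q$, use a unimodular right factor to place $Q$ in one block column with zeros below it, and then clear the remaining blocks with a left factor built from the inverse of the constant $2n\times 2n$ block $\begin{pmatrix}Z_{21}&Z_{22}\\Z_{31}&Z_{32}\end{pmatrix}$. The only cosmetic difference is that the paper combines your column operation $R_1$ and the final permutation into a single matrix $\widetilde{E}(\lambda,\mu)$; the substance, including the observation that $\lambda$-dependent unimodular factors are permitted by the definition, is identical.
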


Consider $Q_{N}(\lam, \mu)$ and the space $\mathcal{N}(Q_N).$ 
We show that pencils $\mathcal{N}(Q_N)$ are lineraizations for the caseof the ansatz vector $v = e_1$, where $e_1 = [1 \; 0 \; 0]^T$.

\begin{theorem}\label{lqn}
Let \[
Q_N(\lambda, \mu) =  A_{20} n_{2}(\lam)+ A_{11}n_{1}(\lambda)m_{1}(\mu) + A_{02} m_{2}(\mu)+A_{10} n_1(\lambda)+ A_{01}m_1(\mu)+A_{00},
\] be an $n \times n$ quadratic two-parameter matrix polynomial in Newton basis, and define the partner polynomial $Q(\lambda, \mu) = \lam^2 A_{20}+ \lam\mu A_{11} + \mu^2 A_{02}+\lam A_{10}+ \mu A_{01}+A_{00},$, using the same coefficients $A_{ij}$ as in $Q_N(\lambda, \mu)$.
If $L(\lambda) = \lambda A_1 +\mu A_2 + A_3 \in \mathbb{L}(Q)$ with ansatz vector $e_1$, then:
\begin{itemize}
    \item[(a)] The matrices $A_1, A_2, A_3 \in \mathbb{C}^{3n \times 3n}$ are of the form given in Theorem~\ref{QAe1}.
    
    \item[(b)] The pencil $L_{N}(\lambda, \mu)= A_1 \Gamma_2 + A_2 \widetilde{\Gamma}_2 +A_3$
    is in $\mathbb{N}(Q_N)$ with ansatz vector $e_1$.

    \item[(c)] $L_{N}(\lambda, \mu)$ is a linearization of $Q_{N}(\lambda, \mu)$.


\end{itemize} 
\end{theorem}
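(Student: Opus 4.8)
The plan is to deduce all three parts from the machinery already assembled, reserving the genuine work for part (c). For part (a), the statement is essentially immediate: by hypothesis $L(\lambda,\mu)=\lambda A_1+\mu A_2+A_3\in\mathbb{L}(Q)$ has ansatz vector $e_1$, so by the characterization behind Theorem~\ref{QAe1} — which parametrizes exactly those pencils in $\mathbb{L}(Q)$ with ansatz vector $e_1$ — the blocks $A_1,A_2,A_3$ must have the displayed block form, with the free parameters $Y_{11}$, $Z_1$, $Z_2$ and the nonsingularity condition on the $\begin{pmatrix}Z_{21}&Z_{22}\\Z_{31}&Z_{32}\end{pmatrix}$ block. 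I would simply invoke Theorem~\ref{QAe1} and quote the form, perhaps after noting that the ``only if'' direction (that every such pencil has this shape) is what is really being used.

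For part (b), I would apply Lemma~\ref{rbaqaqn} directly. That lemma asserts the equivalence
\[
\lambda A_1+\mu A_2+A_3\in\mathbb{L}(Q)\iff A_1\Gamma_2+A_2\widetilde{\Gamma}_2+A_3\in\mathcal{N}(Q_N),
\]
and that the two pencils share the same ansatz vector. Since we are given $L(\lambda,\mu)\in\mathbb{L}(Q)$ with ansatz vector $e_1$, it follows at once that $L_N(\lambda,\mu)=A_1\Gamma_2+A_2\widetilde{\Gamma}_2+A_3\in\mathcal{N}(Q_N)$ with ansatz vector $e_1$. This is a one-line deduction; the only care needed is to make sure the notation $\mathbb{N}(Q_N)$ in the statement is read as $\mathcal{N}(Q_N)$.

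Part (c) is where the real content lies, and it is the step I expect to be the main obstacle. Two routes are available. The first is to mimic the proof of Theorem~\ref{QAe1}: write out $L_N(\lambda,\mu)=A_1\Gamma_2+A_2\widetilde{\Gamma}_2+A_3$ explicitly using the block forms of $A_1,A_2,A_3$ from part (a) and of $\Gamma_2,\widetilde{\Gamma}_2$ from \eqref{gamma2}--\eqref{gamma2t}, and then exhibit unimodular matrix polynomials $P(\lambda,\mu)$ and $R(\lambda,\mu)$ realizing the equivalence $\begin{pmatrix}Q_N(\lambda,\mu)&0\\0&I_{2n}\end{pmatrix}=P(\lambda,\mu)\,L_N(\lambda,\mu)\,R(\lambda,\mu)$; one builds $R$ from a Newton-basis analogue of $N(\lambda,\mu)\otimes I_n$ (using the recurrences $n_2=n_1\gamma_2$, $m_2=m_1\widetilde\gamma_2$ to perform the block row/column reductions) and uses the nonsingularity of the $\begin{pmatrix}Z_{21}&Z_{22}\\Z_{31}&Z_{32}\end{pmatrix}$ block to clear the remaining $2n\times 2n$ corner down to $I_{2n}$. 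The second, cleaner route is to transport the known linearization through the isomorphism: Theorem~\ref{QAe1} gives that $L(\lambda,\mu)$ is a linearization of $Q(\lambda,\mu)$, i.e. $\mathrm{diag}(Q,I_{2n})=P\,L\,R$ with $P,R$ unimodular; since $\Gamma_2(\lambda)$ and $\widetilde\Gamma_2(\mu)$ together with the relation in Lemma~\ref{rbaqaqn} convert $L$ into $L_N$ and $Q$ into $Q_N$ via constant (hence unimodular) change-of-basis factors $S\otimes I_n$ on the variable vectors, one can pull back the strict equivalence — replacing $R$ by $R\cdot(\text{appropriate factor})$ and adjusting $P$ correspondingly — to obtain $\mathrm{diag}(Q_N,I_{2n})=\widehat P\,L_N\,\widehat R$ with $\widehat P,\widehat R$ unimodular. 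The delicate point in either approach is verifying that the polynomial matrices implementing the passage $\Lambda\otimes I_n \rightsquigarrow N\otimes I_n$ and $Q\rightsquigarrow Q_N$ are genuinely unimodular (constant nonzero determinant) rather than merely invertible as rational matrices; this hinges on the triangular structure of $\Gamma_2,\widetilde\Gamma_2$ after the Newton recurrences are applied, and it is exactly the calculation I would spell out carefully while leaving the surrounding block bookkeeping to the reader.
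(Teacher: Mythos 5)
Your primary route for part (c) is exactly the paper's proof: parts (a) and (b) are dispatched by citing Theorem~\ref{QAe1} and Lemma~\ref{rbaqaqn} respectively, and for (c) the paper writes out $L_N(\lambda,\mu)=A_1\Gamma_2+A_2\widetilde{\Gamma}_2+A_3$ in block form, right-multiplies by the Newton-basis reduction matrix
$\widetilde{E}(\lambda,\mu)=\left(\begin{smallmatrix} n_1(\lambda)I_n & I_n & 0\\ m_1(\mu)I_n & 0 & I_n\\ I_n & 0 & 0\end{smallmatrix}\right)$
to expose $Q_N$ in the $(1,1)$ block with a zero block below it, and then uses the nonsingularity of $Z=\left(\begin{smallmatrix} Z_{21}&Z_{22}\\ Z_{31}&Z_{32}\end{smallmatrix}\right)$ to clear the corner to $I_{2n}$ via a left unimodular factor. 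So the proposal is correct and essentially coincides with the paper's argument.

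One caution about your ``cleaner'' second route: it does not work as sketched. Since $n_2(\lambda)=(\lambda-\alpha_1)(\lambda-\alpha_2)$ with possibly distinct nodes, $Q_N$ is \emph{not} $Q$ composed with a change of variables; the passage $Q\rightsquigarrow Q_N$ is a change of basis on the coefficient side, and $\operatorname{diag}(Q,I_{2n})$ and $\operatorname{diag}(Q_N,I_{2n})$ are genuinely different matrix polynomials (note $L_N(\lambda,\mu)=L(\lambda,\mu)-A_1D_\alpha-A_2D_\beta$ for constant diagonal $D_\alpha,D_\beta$, so $L$ and $L_N$ differ by a constant matrix, not by unimodular equivalence). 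Hence one cannot simply adjust $P$ and $R$ by constant (or any obvious unimodular) factors to transport $\operatorname{diag}(Q,I_{2n})=PLR$ into $\operatorname{diag}(Q_N,I_{2n})=\widehat{P}L_N\widehat{R}$; the delicate point is not the unimodularity of the conversion factors but the fact that no such factors exist in general. The reduction must instead run through the Newton-basis ansatz identity $L_N(\lambda,\mu)(N\otimes I_n)=e_1\otimes Q_N(\lambda,\mu)$ from part (b), which is precisely what the first column of $L_N\widetilde{E}$ encodes --- i.e., your first route is the one to carry out.
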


\begin{proof} Part (a) directly follows from Theorem~\ref{QAe1} with $\alpha =1$ and part (b) follows from Lemma~\ref{rbaqaqn} with $v =e_1.$
Now,any linear two-parameter matrix polynomial 
$L_{N}(\lambda, \mu)= A_1 \Gamma_2 + A_2 \widetilde{\Gamma}_2 +A_3$
corresponding to the ansatz vector $v = e_1$ is of the form
\begin{align*}
L_{N}(\lambda, \mu) & =  
\begin{pmatrix}
A_{20} & -Y_{11}+A_{11} & -Z_{11} +A_{01} \\
0 & -Y_{21} & -Z_{21} \\
0 & -Y_{31} & -Z_{31}
\end{pmatrix} \begin{pmatrix}
   \gamma_{2}(\lambda)\otimes I_n & & \\
   & \gamma_{1}(\lambda)\otimes I_n& & \\
   & & & \gamma_{1}(\lambda) \otimes I_n\\
\end{pmatrix} \\
 & + 
\begin{pmatrix}
Y_{11} & A_{02} & -Z_{12}+A_{01} \\
Y_{21} & 0 & -Z_{22} \\
Y_{31} & 0 & -Z_{32}
\end{pmatrix}\begin{pmatrix}
   \widetilde{\gamma}_{1}(\mu)\otimes I_n & & \\
   & \widetilde{\gamma}_{2}(\mu)\otimes I_n& & \\
   & & & \widetilde{\gamma}_{1}(\mu)\otimes I_n\\
\end{pmatrix} \\
& +
\begin{pmatrix}
Z_{11} & Z_{12} & A_{00} \\
Z_{21} & Z_{22} & 0 \\
Z_{31} & Z_{32} & 0
\end{pmatrix} \\
& =  
\begin{pmatrix}
A_{20}\gamma_2 & -Y_{11}\gamma_1+A_{11}\gamma_1 & -Z_{11}\gamma_1 +A_{01}\gamma_1 \\
0 & -Y_{21}\gamma_1 & -Z_{21} \gamma_1\\
0 & -Y_{31}\gamma_1 & -Z_{31} \gamma_1
\end{pmatrix} + 
\begin{pmatrix}
Y_{11}\widetilde{\gamma}_{1} & A_{02}\widetilde{\gamma}_{2} & -Z_{12}\widetilde{\gamma}_{1}+A_{01} \widetilde{\gamma}_{1}\\
Y_{21}\widetilde{\gamma}_{1} & 0 & -Z_{22} \widetilde{\gamma}_{1}\\
Y_{31}\widetilde{\gamma}_{1} & 0 & -Z_{32}\widetilde{\gamma}_{1}
\end{pmatrix}\\
&+
\begin{pmatrix}
Z_{11} & Z_{12} & A_{00} \\
Z_{21} & Z_{22} & 0 \\
Z_{31} & Z_{32} & 0
\end{pmatrix}.
\end{align*}

Thus, we can rewrite $L_{N}(\lambda, \mu)$ as
\[
L_{N}(\lambda, \mu) =
\begin{pmatrix}
\widetilde{W}_1(\lambda, \mu) & \widetilde{W}_2(\lambda, \mu)& \widetilde{W}_3(\lambda, \mu) \\
 Y_{21} \widetilde{\gamma}_{1} + Z_{21} & - Y_{21}\gamma_1 + Z_{22} & - Z_{21}\gamma_1 - Z_{22}\widetilde{\gamma}_{1} \\
Y_{31}\widetilde{\gamma}_{1} + Z_{31} & - Y_{31}\gamma_1 + Z_{32} & -\lambda Z_{31} \gamma_1- \mu Z_{32}\widetilde{\gamma}_{1}
\end{pmatrix},
\]
where
\[
\begin{aligned}
\widetilde{W}_1(\lambda, \mu) &=  A_{20} \gamma_2+  Y_{11} \widetilde{\gamma}_{1}+ Z_{11}, \\
\widetilde{W}_2(\lambda, \mu) &=  -Y_{11}\gamma_1+ A_{11}\gamma_1 + A_{02}\widetilde{\gamma}_{2}+ Z_{12}\\
\widetilde{W}_3(\lambda, \mu) &=  - Z_{11}\gamma_1 + A_{10} \gamma_1- Z_{12} \widetilde{\gamma}_{1}+ A_{01} \widetilde{\gamma}_{1} + A_{00}.
\end{aligned}
\]

Define
$$
\widetilde{E}(\lambda, \mu) =
\begin{pmatrix}
n_{1}(\lambda) I_n & I_n & 0 \\
m_{1}(\mu) I_n & 0 & I_n \\
I_n & 0 & 0
\end{pmatrix}. $$ 
Then we have 
$$L_{N}(\lambda, \mu) \widetilde{E}(\lambda, \mu)=
\begin{pmatrix}
Q_{N}(\lambda, \mu) & \widetilde{W}_1(\lambda, \mu)& \widetilde{W}_2(\lambda, \mu) \\
 Y_{21} \widetilde{\gamma}_{1}n_1 - Y_{21} \gamma_{1}m_1 &  Y_{21}\widetilde{\gamma}_1 + Z_{21} & - Y_{21}\gamma_1 + Z_{22} \\
Y_{31}\widetilde{\gamma}_{1}n_1 -Y_{31} \gamma_{1}m_1 & Y_{31}\widetilde{\gamma}_{1} +Z_{31}  & - Y_{31}\gamma_1 + Z_{32} 
\end{pmatrix}.$$
Setting $Y_{21} = 0=Y_{31}$ we have $$L_{N}(\lambda, \mu) \widetilde{E}(\lambda, \mu)= \begin{pmatrix}
    Q_{N}(\lam, \mu)& \widetilde{W}(\lam, \mu)\\
    0 & Z
\end{pmatrix}, $$
where $$
Z = 
\begin{pmatrix}
Z_{21} & Z_{22} \\
Z_{31} & Z_{32}
\end{pmatrix} \in \mathbb{C}^{2n \times 2n}, \quad \widetilde{W}(\lam, \mu) = \begin{pmatrix}
    \widetilde{W}_1(\lambda, \mu)& \widetilde{W}_2(\lambda, \mu)
\end{pmatrix} \in \mathbb{C}^{n \times 2n}.
$$
Since $Z$ is nonsingular, define
\[
F(\lambda, \mu) = 
\begin{pmatrix}
I & -\widetilde{W}(\lambda, \mu) Z^{-1} \\
0 & Z^{-1}
\end{pmatrix}.
\]
Then we have 
\[
\widetilde{F}(\lambda, \mu) L_{N}(\lambda, \mu) \widetilde{E}(\lambda, \mu) =
\begin{pmatrix}
Q_{N}(\lambda, \mu) & 0 \\
0 & I_{2n}
\end{pmatrix}.
\]
Note that both $\widetilde{E}(\lambda, \mu)$ and $\widetilde{F}(\lambda, \mu)$ are unimodular matrix polynomials. Hence,
\[
\det L_{N}(\lambda, \mu) = \gamma \det Q_{N}(\lambda, \mu)
\]
for some nonzero scalar $\gamma \in \mathbb{C}$. Thus, $L_{N}(\lambda, \mu)$ is a linearization of $Q_{N}(\lambda, \mu)$.
\end{proof}

\begin{exam}
Let \[
Q_N(\lambda, \mu) =  A_{20} n_{2}(\lam)+ A_{11}n_{1}(\lambda)m_{1}(\mu) + A_{02} m_{2}(\mu)+A_{10} n_1(\lambda)+ A_{01}m_1(\mu)+A_{00},
\] be an $n \times n$ quadratic two-parameter matrix polynomial in a Newton basis, and define the partner polynomial $Q(\lambda, \mu) = \lam^2 A_{20}+ \lam\mu A_{11} + \mu^2 A_{02}+\lam A_{10}+ \mu A_{01}+A_{00},$, using the same coefficients $A_{ij}$ as in $Q_N(\lambda, \mu)$.
From (\ref{cfq}), we know that the companion pencil/ standard pencil $L(\lambda, \mu) := \lambda A_1 +\mu A_2 + A_3 $, with $A_1, A_2, A_3 \in \mathbb{C}^{3n \times 3n}$ given in equation (\ref{cfq}), is in $L(Q)$ with ansatz vector $e_1$. 
Theorem~\ref{lqn}(b) then implies that
$L_{N}(\lambda, \mu)= A_1 \Gamma_2 + A_2 \widetilde{\Gamma}_2 +A_3$
is in $\mathcal{N}(Q_N)$ with ansatz vector $e_1$.  
\end{exam}

\subsection{Construction of linearizations}
Let $Q_{N}(\lambda, \mu)$ be a quadratic 
two-parameter matrix polynomial in Newton basis and $L_{N}(\lambda, \mu) \in \mathcal{N}(Q_{N}(\lambda, \mu))$ corresponding to an ansatz vector $0 \ne v \in \mathbb{C}^3$. Then the following is a procedure for determining a set of linearizations of $Q_{N}(\lambda, \mu)$:

\begin{enumerate}
    \item Suppose $Q_{N}(\lambda, \mu)$ is a quadratic two-parameter matrix polynomial and
    \[
    L_{N}(\lambda, \mu) = L_{N}(\lambda, \mu)= A_1 \Gamma_2 + A_2 \widetilde{\Gamma}_2 +A_3 \in \mathcal{N}(Q_{N}(\lambda, \mu))
    \] corresponding to the ansatz vector $v \in \mathbb{C}^3$, i.e.,
    \[
    L_{N}(\lambda, \mu)(N \otimes I_n) = v \otimes Q_{N}(\lambda, \mu).
    \]
    
    \item Select any nonsingular matrix 
    \[
    M = 
    \begin{pmatrix}
    m_{11} & m_{12} & m_{13} \\
    m_{21} & m_{22} & m_{23} \\
    m_{31} & m_{32} & m_{33}
    \end{pmatrix}
    \]
    such that $Mv = e_1 \in \mathbb{C}^3$. A list of such matrices $M$ depending on the entries of $v$ is given in the Appendix.
  \item Apply the corresponding block-transformation $M \otimes I_n$ to $L_{N}(\lambda, \mu)$. Then we have
\[
\widehat{L}_{N}(\lambda, \mu) = (M \otimes I_n) L_{N}(\lambda, \mu) = \widehat{A}_1 \Gamma_2 + \widehat{A}_2 \widetilde{\Gamma}_2 +\widehat{A}_3,
\]
where
\begin{align*}
\widehat{A}_1 &= \left[ e_1 \otimes A_{20} \,\,\,  - \widehat{Y}_1 +  e_1 \otimes A_{11}  \,\,\,- \widehat{Z}_1 +  e_1 \otimes A_{10} \right], \\
A_2 &= \left[ \widehat{Y}_1 \quad  e_1 \otimes A_{02} - \widehat{Z}_2 +  e_1 \otimes A_{01} \right], \\
A_3 &= \left[ \widehat{Z}_1 \quad \widehat{Z}_2 \quad e_1 \otimes A_{00} \right].
\end{align*}

\[
\widehat{Y}_1 = (M \otimes I_n) Y_1 = 
\begin{pmatrix}
m_{11} Y_{11} \\
m_{21} Y_{11} \\
m_{31} Y_{11}
\end{pmatrix}, \quad
\widehat{Z}_1 = (M \otimes I_n) 
\begin{pmatrix}
Z_{11} \\
Z_{21} \\
Z_{31}
\end{pmatrix} =
\begin{pmatrix}
m_{11} Z_{11} + m_{12} Z_{21} + m_{13} Z_{31} \\
m_{21} Z_{11} + m_{22} Z_{21} + m_{23} Z_{31} \\
m_{31} Z_{11} + m_{32} Z_{21} + m_{33} Z_{31}
\end{pmatrix},
\]

\[
\widehat{Z}_2 = (M \otimes I_n) 
\begin{pmatrix}
Z_{12} \\
Z_{22} \\
Z_{32}
\end{pmatrix} =
\begin{pmatrix}
m_{11} Z_{12} + m_{12} Z_{22} + m_{13} Z_{32} \\
m_{21} Z_{12} + m_{22} Z_{22} + m_{23} Z_{32} \\
m_{31} Z_{12} + m_{32} Z_{22} + m_{33} Z_{32}
\end{pmatrix},
\]
where $\widehat{Z}_1$ and $\widehat{Z}_2$ are arbitrary.

\item For $\widehat{L}(\lambda, \mu)$ to be a linearization, we need to choose $\widehat{Y}_1$, $\widehat{Z}_1$, and $\widehat{Z}_2$ as follows. If $m_{21} = m_{31} = 0$, then choose $Y_{11}$ arbitrary; otherwise choose $Y_{11} = 0$. Further, we need to choose
\[
\widehat{Z}_1 = 
\begin{pmatrix}
Z_{11} \\
Z_{21} \\
Z_{31}
\end{pmatrix}, \quad
\widehat{Z}_2 = 
\begin{pmatrix}
Z_{12} \\
Z_{22} \\
Z_{32}
\end{pmatrix},
\]
such that
\begin{equation}
\det \begin{pmatrix}
m_{21} Z_{11} + m_{22} Z_{21} + m_{23} Z_{31} & m_{21} Z_{12} + m_{22} Z_{22} + m_{23} Z_{32} \\
m_{31} Z_{11} + m_{32} Z_{21} + m_{33} Z_{31} & m_{31} Z_{12} + m_{32} Z_{22} + m_{33} Z_{32}
\end{pmatrix} \neq 0. \label{detcond}    
\end{equation}

From the construction of $M$ given in the Appendix, it is easy to check that we can always choose suitable $\widehat{Z}_1$ and $\widehat{Z}_2$ for which the condition (\ref{detcond}) is satisfied.  
\end{enumerate}

\section{Linearization of Two-Parameter Quadratic Eigenvalue Problem via Newton Basis}

The quadratic two-parameter eigenvalue problem via Newton basis is concerned with finding a
pair $(\lambda,\mu) \in \mathbb{C} \times \mathbb{C}$ and nonzero vectors 
$x_i \in \mathbb{C}^{p_i}$ for which
\begin{equation} \label{qteuan}
Q_{{N}_i}(\lambda,\mu)x_i = 0, \quad i=1,2,
\end{equation}
where
\begin{equation}\label{twopara}
Q_{{N}_i}(\lambda,\mu) =   F_{i} n_{2}(\lam)+ E_{i}n_{1}(\lambda)m_{1}(\mu) + D_{i} m_{2}(\mu)+C_{i} n_1(\lambda)+ B_i m_1(\mu)+A_{i},
\end{equation}
with $A_i,B_i,\ldots,F_i \in \mathbb{C}^{p_i \times p_i}$. 
The pair $(\lambda,\mu)$ is called an eigenvalue of (\ref{twopara}) and 
$x_1 \otimes x_2$ is called the corresponding eigenvector. 
The spectrum of a quadratic two-parameter eigenvalue problem in Newton basis is the set
\begin{equation}
\sigma_{Q_{N}} := \{(\lambda,\mu) \in \mathbb{C}\times\mathbb{C} : 
\det Q_{{N}_i}(\lambda,\mu) = 0, \; i=1,2 \}.
\end{equation}

In the generic case, we observe that (\ref{qteuan}) has $4p_1p_2$ eigenvalues by using 
the following theorem.

\begin{theorem}\cite{cox97}
Let $f(x,y) = g(x,y) = 0$ be a system of two polynomial equations in two unknowns. If it has only finitely many common complex zeros 
$(x,y) \in \mathbb{C}\times\mathbb{C}$, then the number of those zeros is at most 
$\deg(f)\cdot \deg(g)$.
\end{theorem}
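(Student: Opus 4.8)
The plan is to prove this weak form of B\'ezout's theorem by the classical resultant method. Write $m=\deg f$ and $n=\deg g$. First I would dispose of degenerate cases: if $f\equiv 0$ or $g\equiv 0$ the common zero set is a curve or all of $\mathbb{C}^2$, hence infinite; if $f$ or $g$ is a nonzero constant there are no common zeros and $mn=0$; so we may assume $f,g$ are both nonconstant. Since the common zero set is finite, $f$ and $g$ can have no common factor of positive degree in $\mathbb{C}[x,y]$ (such a factor would cut out a plane curve, which has infinitely many points), so $\gcd(f,g)=1$. Next I would apply a generic invertible linear change of coordinates, chosen so that: (i) the finitely many common zeros have pairwise distinct $x$-coordinates; and (ii) $f$ and $g$, viewed as polynomials in $y$ with coefficients in $\mathbb{C}[x]$, have leading coefficients that are nonzero constants (equivalently, the top-degree homogeneous parts of $f$ and $g$ do not vanish at the new point $(0,1)$). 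Each of (i) and (ii) fails only on a finite union of proper subvarieties of the space of linear changes, so a single change realizing both exists over $\mathbb{C}$; and an invertible linear substitution preserves both total degree and coprimality, so $m,n$ and $\gcd(f,g)=1$ are unchanged.

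After this normalization write $f=\sum_{i=0}^{m}a_i(x)y^i$ and $g=\sum_{j=0}^{n}b_j(x)y^j$ with $a_m,b_n$ nonzero constants and $\deg a_i\le m-i$, $\deg b_j\le n-j$. Let $R(x)=\mathrm{Res}_y(f,g)\in\mathbb{C}[x]$ be the Sylvester resultant in $y$. Because $\gcd(f,g)=1$ in $\mathbb{C}[x,y]$ — hence, by Gauss's lemma, $f$ and $g$ are coprime in $\mathbb{C}(x)[y]$ — and the leading coefficients $a_m,b_n$ do not vanish, $R$ is not the zero polynomial. The specialization property of the resultant then gives, for every $\alpha\in\mathbb{C}$: $R(\alpha)=0$ if and only if $f(\alpha,y)$ and $g(\alpha,y)$ have a common root in $\mathbb{C}$ — here condition (ii) is exactly what rules out a spurious vanishing of $R$ caused by a drop of $y$-degree upon specialization. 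Consequently the $x$-coordinate of every common zero of $f$ and $g$ is a root of $R$, and by (i) distinct common zeros yield distinct roots of $R$. Hence the number of common zeros is at most the number of distinct roots of $R$, which is at most $\deg R$.

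It then remains to show $\deg R\le mn$. I would obtain this either by a weighting argument on the $(m+n)\times(m+n)$ Sylvester matrix $S(x)$ with $\det S(x)=R(x)$ — using the degree bounds $\deg a_i\le m-i$, $\deg b_j\le n-j$ to check that every term in the Leibniz expansion has $x$-degree at most $mn$ — or, more cleanly, by homogenizing: let $F,G\in\mathbb{C}[x,y,z]$ be the degree-$m$ and degree-$n$ homogenizations of $f,g$; since the $y$-leading coefficients of $F,G$ are the nonzero constants $a_m,b_n$, their resultant in $y$ (with coefficients in $\mathbb{C}[x,z]$) is homogeneous of degree $mn$ in $(x,z)$, and setting $z=1$ recovers $R(x)$, so $\deg R\le mn$. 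Combining this with the previous paragraph yields that the number of common zeros is at most $mn=\deg(f)\cdot\deg(g)$.

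The main obstacle is the coordinate normalization combined with the degree estimate: one must argue with care that a single generic linear substitution simultaneously separates the $x$-coordinates of the finitely many common zeros, makes both $y$-leading coefficients nonvanishing constants, and preserves total degrees and coprimality; and one must verify $\deg R\le mn$, which is the genuinely quantitative core of B\'ezout's theorem. The remaining ingredients — the reduction to $\gcd(f,g)=1$ via finiteness of the zero set, and the standard specialization property of resultants — are routine.
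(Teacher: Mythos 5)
The paper does not prove this statement at all: it is quoted verbatim from Cox--Little--O'Shea \cite{cox97} as a known weak form of B\'ezout's theorem and is used only to count eigenvalues of the quadratic two-parameter problem, so there is no internal proof to compare against. Judged on its own, your resultant argument is the classical proof and is essentially correct: the reduction to $\gcd(f,g)=1$ via finiteness of the zero set, the generic linear change making the $y$-leading coefficients nonzero constants and separating $x$-coordinates, the nonvanishing and specialization of $\mathrm{Res}_y(f,g)$ (using Gauss's lemma to get coprimality in $\mathbb{C}(x)[y]$), and the bound $\deg R\le mn$ via the Sylvester-matrix weighting or homogenization are all standard and fit together as you describe. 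Two small points you should make explicit if you write this out in full: after the normalization you need $\deg_y f=m\ge 1$ and $\deg_y g=n\ge 1$ so that the specialized resultant of $f(\alpha,\cdot)$ and $g(\alpha,\cdot)$ genuinely detects common roots (this is guaranteed by your condition (ii) together with $f,g$ nonconstant), and the genericity argument for the coordinate change should note that $\mathbb{C}$ is infinite so a finite union of proper subvarieties cannot exhaust the space of invertible linear substitutions. Neither is a gap in the idea, only in the level of detail.
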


The usual approach to solving (\ref{qteuan}) is to linearize it as a two-parameter
eigenvalue problem given by
\begin{equation}\label{tqtp}
L_{{N}_1}(\lambda, \mu)w_1 = \big( A^{(1)}\Gamma_2 + B^{(1)} \widetilde{\Gamma}_2 +C^{(1)}\big) w_1 = 0, \end{equation}
\begin{equation}\label{tqtp1}
L_{{N}_2}(\lambda, \mu)w_2 = \big( A^{(2)} \Gamma_2 + B^{(2)} \widetilde{\Gamma}_2 +C^{(2)}\big) w_2 = 0, 
\end{equation}
where {\scriptsize $$\Gamma_2(\lambda) = \begin{pmatrix}
   \gamma_{2}(\lambda)\otimes I_{p_1} & & \\
   & \gamma_{1}(\lambda)\otimes I_{p_1}& & \\
   & & & \gamma_{1}(\lambda) \otimes I_{p_1}\\
\end{pmatrix}   
\widetilde{\Gamma}_2(\mu) = \begin{pmatrix}
   \widetilde{\gamma}_{1}(\mu)\otimes I_{p_1} & & \\
   & \widetilde{\gamma}_{2}(\mu)\otimes I_{p_1}& & \\
   & & & \widetilde{\gamma}_{1}(\mu)\otimes I_{p_1}\\
\end{pmatrix}$$} for (\ref{tqtp}), 

{\scriptsize $$\Gamma_2(\lambda) = \begin{pmatrix}
   \gamma_{2}(\lambda)\otimes I_{p_2} & & \\
   & \gamma_{1}(\lambda)\otimes I_{p_2}& & \\
   & & & \gamma_{1}(\lambda) \otimes I_{p_2}\\
\end{pmatrix}   
\widetilde{\Gamma}_2(\mu) = \begin{pmatrix}
   \widetilde{\gamma}_{1}(\mu)\otimes I_{p_2} & & \\
   & \widetilde{\gamma}_{2}(\mu)\otimes I_{p_2}& & \\
   & & & \widetilde{\gamma}_{1}(\mu)\otimes I_{p_2}\\
\end{pmatrix}$$} for (\ref{tqtp1}) and $A^{(i)},B^{(i)},C^{(i)} \in \mathbb{C}^{k_i \times k_i}$ with 
$k_i \ge 2p_i, \; i=1,2$, and $w_i = N \otimes x_i$. 
A pair $(\lambda, \mu)$ is called an eigenvalue of (\ref{tqtp}) and (\ref{tqtp1}) if 
$L_{N{i}}(\lambda,\mu)w_i = 0$ for a nonzero vector $w_i$ for $i=1,2$, and 
$w_1 \otimes w_2$ is the corresponding eigenvector. Thus the spectrum
of the linearized two-parameter eigenvalue problem is given by
\begin{equation}
\sigma_{L_{N}} := \{(\lambda,\mu) \in \mathbb{C}\times\mathbb{C} : 
\det L_{{N}_i}(\lambda,\mu) = 0, \; i=1,2 \}.
\end{equation}

Therefore,  the problem (\ref{tqtp}) has $k_1k_2 \ge 4p_1p_2$ 
eigenvalues.

A standard approach to solve a two-parameter eigenvalue problem (\ref{tqtp}) is
by converting it into a coupled generalized eigenvalue problem given by
\[
\Delta_1 z = \Gamma_2 \Delta_0 z, \qquad 
\Delta_2 z = \widetilde{\Gamma_2} \Delta_0 z,
\]
where $z = w_1 \otimes w_2$ and
\begin{align*}
\Delta_0 &= B^{(1)} \otimes C^{(2)} - C^{(1)} \otimes B^{(2)}, \\
\Delta_1 &= C^{(1)} \otimes A^{(2)} - A^{(1)} \otimes C^{(2)}, \\
\Delta_2 &= A^{(1)} \otimes B^{(2)} - B^{(1)} \otimes A^{(2)}.
\end{align*}

The two-parameter eigenvalue problem is called \emph{singular} (resp. \emph{nonsingular}) 
if $\Delta_0$ is singular (resp. nonsingular), see~\cite{AMBPle10}.  

As mentioned earlier, we are interested in finding linear two-parameter polynomials 
$L_{{N}_{i}}(\lambda,\mu)$ for a given quadratic two-parameter eigenvalue problem in Newton basis (\ref{qteuan}) 
such that $\sigma_{Q_{N}} = \sigma_{L_{N}}$. Thus we have the following definition.  

\begin{definition}
Let (\ref{qteuan}) be a quadratic two-parameter eigenvalue problem in Newton basis. A two-parameter 
eigenvalue problem (\ref{tqtp}) and (\ref{tqtp1}) is said to be a \emph{linearization} of (\ref{qteuan}) if 
$L_{N_{i}}(\lambda,\mu)$ is a linearization of $Q_{N_{i}}(\lambda,\mu)$.  
\end{definition}

Thus, if we consider a linearization of a quadratic two-parameter eigenvalue problem in Newton basis, 
then $\sigma_{Q_{N}} = \sigma_{L_{N}}$ is guaranteed. It is also easy to observe that 
$x_1 \otimes x_2$ is an eigenvector corresponding to an eigenvalue $(\lambda,\mu)$ 
of a quadratic two-parameter eigenvalue problem if and only if 
$w_1 \otimes w_2$ is an eigenvector corresponding to the eigenvalue $(\lambda,\mu)$ 
of the linearization.

Using the construction of linearizations for a two-parameter quadratic matrix polynomial in Newton basis described in Section 3, we develop linearizations for (\ref{qteuan}).

\begin{theorem}\label{sp}
Let (\ref{qteuan}) be a quadratic two-parameter eigenvalue problem in Newton basis. A class of linearizations 
of (\ref{qteuan}) is given by
\[
L_{N_{i}}(\lambda,\mu) w_i = 
\big(A^{(i)} + \Gamma_{2} B^{(i)} + \widetilde{\Gamma_{2}} C^{(i)}\big) w_i = 0, 
\qquad w_i = N \otimes x_i, \; i=1,2,
\]
where
\begin{align*}
A^{(i)} &= Z^{(i)}_1 + Z^{(i)}_2 + e_1 \otimes A_i, \\[4pt]
B^{(i)} &=  e_1 \otimes D_i - Y^{(i)}_1 
          +  e_1 \otimes E_i - Z^{(i)}_1 
          +  e_1 \otimes B_i, \\[4pt]
C^{(i)} &= Y^{(i)}_1 + e_1 \otimes F_i - Z^{(i)}_2 
          +  e_1 \otimes C_i,
\end{align*}
\[
Y^{(i)}_1 =
\begin{pmatrix}
Y^{(i)}_{11} \\[4pt] 0 \\[4pt] 0
\end{pmatrix},
\qquad
Z^{(i)}_1 =
\begin{pmatrix}
Z^{(i)}_{11} \\[4pt] Z^{(i)}_{21} \\[4pt] Z^{(i)}_{31}
\end{pmatrix},
\qquad
Z^{(i)}_2 =
\begin{pmatrix}
Z^{(i)}_{12} \\[4pt] Z^{(i)}_{22} \\[4pt] Z^{(i)}_{32}
\end{pmatrix}
\in \mathbb{C}^{3p_i \times p_i},
\]
and
\[
\det 
\begin{pmatrix}
Z^{(i)}_{21} & Z^{(i)}_{22} \\
Z^{(i)}_{31} & Z^{(i)}_{32}
\end{pmatrix}
\neq 0.
\]
\end{theorem}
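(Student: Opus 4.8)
The plan is to reduce Theorem~\ref{sp} to two applications of Theorem~\ref{lqn} --- one for each index $i=1,2$ --- since the definition of linearization of a quadratic two-parameter eigenvalue problem in Newton basis is precisely the requirement that each $L_{N_i}(\lambda,\mu)$ is a linearization of $Q_{N_i}(\lambda,\mu)$ in the single-polynomial sense. So the whole statement decouples, and it suffices to verify, for a fixed $i$, that the pencil
\[
L_{N_i}(\lambda,\mu) = A^{(i)} + \Gamma_2 B^{(i)} + \widetilde{\Gamma}_2 C^{(i)}
\]
built from the displayed blocks is exactly of the form produced by Theorem~\ref{lqn}(b)--(c) with ansatz vector $e_1$, applied to $Q_{N_i}$ with coefficients $(A_i,B_i,C_i,D_i,E_i,F_i)$ playing the roles of $(A_{00},A_{01},A_{10},A_{02},A_{11},A_{20})$ respectively.

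First I would set up the dictionary between the two notations: in Theorem~\ref{lqn} the Newton-basis quadratic has coefficients $A_{20},A_{11},A_{02},A_{10},A_{01},A_{00}$, whereas here $Q_{N_i}$ has $F_i,E_i,D_i,C_i,B_i,A_i$ in the corresponding slots of \eqref{twopara}. Under this relabelling, the matrices $\widetilde A_1,\widetilde A_2,\widetilde A_3$ of Theorem~\ref{QAe1} (with $\alpha=1$, $v=e_1$) become precisely the blocks that, after multiplication by $\Gamma_2$, $\widetilde\Gamma_2$ and $I$ respectively and summation, give the $C^{(i)},B^{(i)},A^{(i)}$ written above. I would check this block-by-block: the $A^{(i)}$ block collects the $Z^{(i)}_1,Z^{(i)}_2,e_1\otimes A_i$ terms (the constant part $\widetilde A_3$); the $\Gamma_2 B^{(i)}$ term reproduces the $\lambda$-part $\widetilde A_1$ after the substitutions $n_2=n_1\gamma_2$, $n_1\cdot 1 = \gamma_1 \cdot 1$ recorded before Lemma~\ref{rbaqaqn}; and the $\widetilde\Gamma_2 C^{(i)}$ term reproduces the $\mu$-part $\widetilde A_2$ using $m_2 = m_1\widetilde\gamma_2$. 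This is exactly the content of Lemma~\ref{rbaqaqn}, which guarantees $\lambda A_1 + \mu A_2 + A_3 \in \mathbb{L}(Q_i) \iff A_1\Gamma_2 + A_2\widetilde\Gamma_2 + A_3 \in \mathcal{N}(Q_{N_i})$ with the same ansatz vector, so the membership in $\mathcal{N}(Q_{N_i})$ with ansatz $e_1$ is automatic once the blocks match Theorem~\ref{QAe1}.

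Then the linearization claim itself is Theorem~\ref{lqn}(c): under the nonsingularity hypothesis
\[
\det\begin{pmatrix} Z^{(i)}_{21} & Z^{(i)}_{22} \\ Z^{(i)}_{31} & Z^{(i)}_{32} \end{pmatrix} \neq 0,
\]
the explicit unimodular multipliers $\widetilde E(\lambda,\mu)$ and $\widetilde F(\lambda,\mu)$ constructed there (with $p_i$ in place of $n$) bring $L_{N_i}(\lambda,\mu)$ to $\mathrm{diag}(Q_{N_i}(\lambda,\mu), I_{2p_i})$, so $L_{N_i}$ is a linearization of $Q_{N_i}$. Doing this for $i=1$ and $i=2$ and invoking the definition of linearization of \eqref{qteuan} finishes the proof; one may add the observation (already noted in the text) that consequently $\sigma_{Q_N}=\sigma_{L_N}$ and $w_1\otimes w_2 = (N\otimes x_1)\otimes(N\otimes x_2)$ is the eigenvector corresponding to $x_1\otimes x_2$.

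\textbf{Main obstacle.} The only real work is the bookkeeping in the block-matching step: one must be careful that the ordering convention of the coefficients in \eqref{twopara} (where $F_i$ multiplies $n_2$, $E_i$ multiplies $n_1 m_1$, $D_i$ multiplies $m_2$) is consistently carried through the substitutions $n_2 = n_1\gamma_2$ and $m_2 = m_1\widetilde\gamma_2$, so that the $\gamma_1,\gamma_2,\widetilde\gamma_1,\widetilde\gamma_2$ factors land on the correct blocks of $\Gamma_2 B^{(i)}$ and $\widetilde\Gamma_2 C^{(i)}$; a mismatched index there would break the identity $L_{N_i}\widetilde E = \mathrm{diag}(Q_{N_i}, *)$. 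Everything else is a direct specialization of results already proved.
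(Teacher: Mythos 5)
Your proposal is correct and takes essentially the same route as the paper: the paper's entire proof is a one-line invocation of Theorem~\ref{lqn} with ansatz vector $e_1$ applied to each $Q_{N_i}$, $i=1,2$, which is exactly your reduction. Your explicit block-by-block matching under the dictionary $(A_i,B_i,C_i,D_i,E_i,F_i)\leftrightarrow(A_{00},A_{01},A_{10},A_{02},A_{11},A_{20})$ goes beyond what the paper writes, and the bookkeeping you flag as the main obstacle is genuinely where care is needed --- a literal specialization of Theorem~\ref{QAe1} puts $F_i, E_i, C_i$ (with $Z_1$) in the $\Gamma_2$-coefficient and $Y_1, D_i, B_i$ (with $Z_2$) in the $\widetilde{\Gamma}_2$-coefficient, which does not match the ordering displayed in the statement's $B^{(i)}$ and $C^{(i)}$, so carrying out your verification step is exactly what is required to pin down the correct blocks.
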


\begin{proof}
Consider the linearizations 
$L_{N_{i}}(\lambda,\mu)=A^{(i)} + \Gamma_2 B^{(i)} + \widetilde{\Gamma_2} C^{(i)}$ of $Q_{N_{i}}(\lambda,\mu)$, $i=1,2$, 
associated with ansatz vector $0 \neq e_1 \in \mathbb{C}^3$, 
given by Theorem~\ref{lqn}. This completes the proof.  
\end{proof}

We now demonstrate that the linearizations for a quadratic two-parameter eigenvalue problem, as described in Theorem~\ref{sp}, are singular. The following theorem is crucial for the subsequent discussion.

\begin{theorem}\cite{haj}\label{dbt}
The determinant of a block-triangular matrix is the product of the determinants of 
the diagonal blocks.
\end{theorem}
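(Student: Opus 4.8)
The plan is to reduce to the case of two diagonal blocks by induction on the number of blocks, and to settle that case with an explicit determinant-preserving factorization that requires no invertibility hypothesis.

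Write a block upper-triangular matrix as
\[
M = \begin{pmatrix} A & B \\ 0 & D \end{pmatrix},
\qquad A \in \mathbb{C}^{k\times k},\ D \in \mathbb{C}^{m\times m},\ B \in \mathbb{C}^{k\times m}.
\]
First I would record the identity
\[
\begin{pmatrix} A & B \\ 0 & D \end{pmatrix}
=
\begin{pmatrix} I_k & B \\ 0 & D \end{pmatrix}
\begin{pmatrix} A & 0 \\ 0 & I_m \end{pmatrix},
\]
which is valid for arbitrary $A,B,D$, and then apply multiplicativity of the determinant. Next I would evaluate the two factors: for the first, column operations using the $I_k$ block clear $B$ without changing the determinant, reducing it to $\operatorname{diag}(I_k,D)$, whose determinant is $\det D$ by iterated Laplace expansion along the first $k$ columns; the second factor is already block diagonal, with determinant $\det A$. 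Hence $\det M = \det A\cdot\det D$. Finally, for a block-triangular matrix with diagonal blocks $A_1,\dots,A_r$, I would group $A_1$ against the block-triangular submatrix built from $A_2,\dots,A_r$, apply the two-block result, and complete the induction on $r$; the block lower-triangular case follows at once by transposition, since $\det M = \det M^{\mathsf T}$.

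I do not anticipate a genuine obstacle, as the statement is classical; the only point requiring a moment's care is to avoid the more familiar factorization that peels off $\operatorname{diag}(A,I_m)$ on the left and leaves $I_k$ in the top-left corner of the other factor, since that factorization involves $A^{-1}B$ and so tacitly assumes $A$ invertible. The asymmetric factorization above removes that restriction. A self-contained alternative, bypassing elementary operations entirely, is the Leibniz-formula argument: in $\det M = \sum_\sigma \operatorname{sgn}(\sigma)\prod_i M_{i,\sigma(i)}$ every permutation $\sigma$ contributing a nonzero term must satisfy $\sigma(i) > k$ for all $i > k$ (otherwise a factor from the zero block occurs), so $\sigma$ permutes $\{1,\dots,k\}$ and $\{k+1,\dots,n\}$ separately and the sum factors as $\det A\cdot\det D$.
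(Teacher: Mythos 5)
Your argument is correct, but note that the paper does not actually prove this statement: it is quoted from Horn and Johnson \cite{haj} as a known fact and used as a black box in the subsequent proof that the linearizations of (\ref{qteuan}) are singular. So there is no in-paper proof to compare against; what you have supplied is a self-contained justification of a cited classical result. As such it is sound. The factorization
\[
\begin{pmatrix} A & B \\ 0 & D \end{pmatrix}
=
\begin{pmatrix} I_k & B \\ 0 & D \end{pmatrix}
\begin{pmatrix} A & 0 \\ 0 & I_m \end{pmatrix}
\]
holds for arbitrary blocks, the two factors have determinants $\det D$ and $\det A$ as you compute, and the induction on the number of diagonal blocks together with transposition covers the general block-triangular case. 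Your caution about avoiding the Schur-complement-style factorization involving $A^{-1}B$ is well placed, and is in fact directly relevant to how the theorem is used in this paper: in the application, $\Delta_0$ has a zero diagonal block, so any version of the result whose proof tacitly assumes invertibility of a diagonal block would not suffice there. The Leibniz-formula argument you sketch at the end is equally valid and arguably the cleanest route, since the observation that every nonvanishing permutation must preserve each block of indices gives the $r$-block statement in one step, with no induction and no case distinction between upper and lower triangular forms.
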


Now we present the following result, whose proof directly follows from Theorem~3.5 in \cite{BA12}. However, for completeness, we provide the proof here.

\begin{theorem}
The linearizations for (\ref{qteuan}) derived in Theorem 4.3 are singular linearizations.
\end{theorem}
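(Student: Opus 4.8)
The plan is to show that the operator determinant $\Delta_0$ associated with the pair of linearizations from Theorem~4.3 is the zero operator (equivalently, singular), which by the definition of singularity given just before the statement is exactly what must be proved. First I would recall from Theorem~4.3 that each linearization has the block structure
\[
L_{N_i}(\lambda,\mu) = A^{(i)} + \Gamma_2 B^{(i)} + \widetilde{\Gamma_2} C^{(i)},
\]
and, since it arises from the ansatz vector $e_1$, the matrices $B^{(i)}$ and $C^{(i)}$ have a very particular sparsity pattern in their block rows: only the first block row involves the coefficient matrices $D_i, E_i, F_i, B_i, C_i$, while the second and third block rows of $B^{(i)}$ and $C^{(i)}$ are built solely from the blocks of $Y^{(i)}_1$ and $Z^{(i)}_1, Z^{(i)}_2$. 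In fact, because $Y^{(i)}_1$ has zero second and third block entries, the key observation is that the second and third block rows of $B^{(i)}$ and of $C^{(i)}$ are \emph{negatives of each other up to the fixed $Z$-blocks} — more precisely, the bottom two block rows of $B^{(i)}$ are $-Z^{(i)}_1$-type and those of $C^{(i)}$ are $-Z^{(i)}_2$-type, so there is a linear dependence among the bottom block rows of $A^{(i)}$, $B^{(i)}$, $C^{(i)}$.

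The main step is then to exploit this structural dependence. I would argue exactly as in the proof of Theorem~3.5 of \cite{BA12}: there is a fixed (constant, $\lambda,\mu$-independent) row operation that simultaneously annihilates a block row in all three coefficient matrices $A^{(i)}, B^{(i)}, C^{(i)}$ of $L_{N_i}$, producing a block row of zeros in the pencil $L_{N_i}(\lambda,\mu)$ itself. Equivalently, there is a nonzero constant left null vector $u_i$ (coming from the bottom block structure) with $u_i^{T}\!\big(A^{(i)} + \Gamma_2 B^{(i)} + \widetilde{\Gamma_2} C^{(i)}\big) \equiv 0$ independent of $(\lambda,\mu)$; tracking the same linear combination through the definitions of $\Delta_0 = B^{(1)}\otimes C^{(2)} - C^{(1)}\otimes B^{(2)}$ shows, using the mixed-product property of the Kronecker product, that $u_1\otimes u_2$ (or the appropriately paired constant vector) lies in the left kernel of $\Delta_0$. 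Hence $\Delta_0$ is singular and the linearization is singular by definition.

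The hard part will be bookkeeping the index pattern carefully enough to identify the correct constant vector in the left kernel and to verify that the cancellation in $\Delta_0$ is genuine and not merely an artifact of one factor — one must check that the same block row of zeros (or the same dependency) appears in $B^{(i)}$ and $C^{(i)}$ in a way that survives the Kronecker differences defining $\Delta_0$, $\Delta_1$, $\Delta_2$. Since Theorem~3.5 of \cite{BA12} already carries out precisely this computation for the monomial-basis linearizations, and Lemma~\ref{rbaqaqn} together with Theorem~\ref{lqn} transfers the relevant block structure verbatim to the Newton-basis setting (the matrices $\Gamma_2$, $\widetilde{\Gamma}_2$ are invertible away from a lower-dimensional set and do not affect the constant left kernel), I would simply verify that the diagonal $\Gamma_2$, $\widetilde{\Gamma}_2$ factors commute appropriately with the relevant projections and then invoke the \cite{BA12} argument. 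This yields $\det\big(\Delta_1 - \Gamma_2\Delta_0\big)\equiv 0$ in the singular sense, completing the proof.
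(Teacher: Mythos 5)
Your target is the right one: by the definition adopted just before the theorem, it suffices to show that $\Delta_0=B^{(1)}\otimes C^{(2)}-C^{(1)}\otimes B^{(2)}$ is singular. But the mechanism you propose cannot work. You claim there is a constant row operation, equivalently a nonzero constant vector $u_i$, with $u_i^{T}\bigl(A^{(i)}+\Gamma_2 B^{(i)}+\widetilde{\Gamma}_2 C^{(i)}\bigr)\equiv 0$, i.e.\ a block row of zeros in the pencil $L_{N_i}(\lambda,\mu)$ itself. No such vector exists: it would force $\det L_{N_i}(\lambda,\mu)\equiv 0$, contradicting the fact (Theorem~\ref{sp} via Theorem~\ref{lqn}) that $\det L_{N_i}(\lambda,\mu)=\gamma\det Q_{N_i}(\lambda,\mu)$ with $\gamma\neq 0$. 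The flaw is in the claimed ``linear dependence among the bottom block rows of $A^{(i)},B^{(i)},C^{(i)}$'': the bottom two block rows of $A^{(i)}$ are not degenerate at all, since they carry the block $\bigl(\begin{smallmatrix} Z^{(i)}_{21} & Z^{(i)}_{22}\\ Z^{(i)}_{31} & Z^{(i)}_{32}\end{smallmatrix}\bigr)$, which is required to be \emph{nonsingular}. The singularity of the coupled problem is not a degeneracy of the individual pencils, so no argument that annihilates a row of $L_{N_i}$ can be correct.

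What actually makes $\Delta_0$ singular is that $\Delta_0$ involves only $B^{(i)}$ and $C^{(i)}$ (the coefficients of $\Gamma_2$ and $\widetilde{\Gamma}_2$), and the nonsingular $Z$--block sits entirely in $A^{(i)}$, which drops out of $\Delta_0$. Both $B^{(i)}$ and $C^{(i)}$ have zero $(2,1),(2,2),(3,1),(3,2)$ blocks, so the same zero pattern is inherited by $B^{(1)}\otimes C^{(2)}$ and $C^{(1)}\otimes B^{(2)}$ at the outer block level; hence $\Delta_0$ has the form $\bigl(\begin{smallmatrix} * & * & *\\ 0&0&*\\ 0&0&*\end{smallmatrix}\bigr)$, which is block triangular with a zero diagonal block, giving $\det\Delta_0=0$ by Theorem~\ref{dbt}. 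This is exactly the paper's computation (following Theorem~3.5 of \cite{BA12}); no left null vector of the pencils and no commutation argument with $\Gamma_2$, $\widetilde{\Gamma}_2$ is needed or available. Your closing criterion $\det(\Delta_1-\Gamma_2\Delta_0)\equiv 0$ is also not the definition in use here: singularity of the two-parameter problem means precisely that $\Delta_0$ itself is singular.
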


\begin{proof}
Consider the linearizations 
\[
L_{N_{i}}(\lambda,\mu)w_i = (A^{(i)} + \Gamma_2 B^{(i)} + \widetilde{\Gamma_2} C^{(i)}) w_i = 0, 
\quad i=1,2,
\]
of $Q_{N_{i}}(\lambda,\mu)$, where
\[
B^{(i)} =
\begin{pmatrix}
 D_i & - Y^{(i)}_{11} +  E_i & - Z^{(i)}_{11} + B_i  \\
0 & 0 & -Z^{(i)}_{21} \\
0 & 0 & -Z^{(i)}_{31}
\end{pmatrix}, 
\qquad
C^{(i)} =
\begin{pmatrix}
Y^{(i)}_{11} & F_i & - Z^{(i)}_{12} +  C_i  \\
0 & 0 & -Z^{(i)}_{22} \\
0 & 0 & -Z^{(i)}_{32}
\end{pmatrix}.
\]

Consequently, we have
\begin{align*}
\Delta_0 &= B^{(1)} \otimes C^{(2)} - C^{(1)} \otimes B^{(2)}.\\
& =
\begin{pmatrix}
 D_1 \otimes C^{(2)} & (-Y^{(1)}_{11}+ E_1)\otimes C^{(2)} & (-Z^{(1)}_{11}+ B_1)\otimes C^{(2)} \\
0 & 0 & -Z^{(1)}_{21} \otimes C^{(2)} \\
0 & 0 & -Z^{(1)}_{31} \otimes C^{(2)}
\end{pmatrix} \\
&-
\begin{pmatrix}
Y^{(1)}_{11}\otimes B^{(2)} &  F_1\otimes B^{(2)} & (-Z^{(1)}_{12}+ C_1)\otimes B^{(2)} \\
0 & 0 & -Z^{(1)}_{22}\otimes B^{(2)} \\
0 & 0 & -Z^{(1)}_{32}\otimes B^{(2)}
\end{pmatrix}. 
\end{align*}

Note that $\Delta_0$ is a block-triangular matrix with one of the diagonal 
blocks equal to $0$. Hence, by Theorem~\ref{dbt}, we have $\det \Delta_0=0$. 
This completes the proof.
\end{proof}

\begin{remark}
Note that given a quadratic two-parameter eigenvalue problem in Newton basis (\ref{qteuan}), we choose 
linearizations $L_i(\lambda,\mu)$ of $Q_{N_{i}}(\lambda,\mu)$ associated with the ansatz 
vector $0 \neq e_1 \in \mathbb{C}^3$, and constructed linearizations 
$L_{N_{i}}(\lambda,\mu)w_i=0$, $w_i=N \otimes x_i$ of (\ref{qteuan}). However, we can 
derive a large class of singular linearizations by choosing linearizations 
$L_{N_{i}}(\lambda,\mu)$ of $Q_{N_{i}}(\lambda,\mu)$ associated with an ansatz vector 
$0\neq v_i \in \mathbb{C}^3$, as described in Section~3.
\end{remark}

\section{Conclusion}
Given a quadratic two-parameter matrix polynomial in Newton basis $Q_{N}(\lam, \mu)$, we constructed a vector space of linear two-parameter matrix polynomials and identify a set of linearizations which lie in the vector space. We also described construction of each of these linearizations. Further, by employing these linearizations, we identify a class of singular linearizations for a quadratic two-parameter eigenvalue problem in Newton basis.  

\section*{Compliance with Ethical Standards} 
\begin{itemize}
    \item {\bf Conflict of interests:} The authors declare no conflict of interest.
\item {\bf Funding:} The authors did not receive support from any organization for the submitted work.
\item {\bf Financial and non-financial interests:} The authors have no relevant financial or non-financial
interests to disclose.\\

Namita Behera \\
Department of Mathematics\\
sikkim University\\
Tadong\\
Sikkim-737102\\
India\\
Email: nbehera@cus.ac.in
\end{itemize}

\newpage

\section*{Appendix} \cite{BA12}

Let 
\[
e_1 = 
\begin{pmatrix}
1 \\
0 \\
0
\end{pmatrix}.
\]
Given a vector 
\[
v = 
\begin{pmatrix}
a \\
b \\
c
\end{pmatrix} \in \mathbb{C}^3,
\]
we can always pick a nonsingular matrix \( M \in \mathbb{C}^{3 \times 3} \) for which \( Mv = e_1 \), as follows:

\[
M =
\begin{cases}
\begin{pmatrix}
1/a & 0 & 0 \\
1/a & -1/b & 0 \\
1/a & 0 & -1/c
\end{pmatrix}, & \text{if } a \neq 0, b \neq 0, c \neq 0 \\[10pt]

\begin{pmatrix}
0 & 1/b & 0 \\
0 & -1/b & 1/c \\
1 & 0 & 0
\end{pmatrix}, & \text{if } a = 0, b \neq 0, c \neq 0 \\[10pt]

\begin{pmatrix}
1 & 1 & 1/c \\
1 & 1 & 0 \\
0 & 1 & 0
\end{pmatrix}, & \text{if } a = 0, b = 0, c \neq 0 \\[10pt]

\begin{pmatrix}
1/a & 0 & 0 \\
0 & 1 & 0 \\
-1/a & 0 & 1/c
\end{pmatrix}, & \text{if } a \neq 0, b = 0, c \neq 0 \\[10pt]

\begin{pmatrix}
1/a & 0 & 0 \\
0 & 1 & 0 \\
0 & 1 & 1
\end{pmatrix}, & \text{if } a \neq 0, b = 0, c = 0 \\[10pt]

\begin{pmatrix}
1/a & 0 & 1 \\
1/a & -1/b & 1 \\
-1/a & 1/b & 0
\end{pmatrix}, & \text{if } a \neq 0, b \neq 0, c = 0 \\[10pt]

\begin{pmatrix}
1 & 1/b & 0 \\
1 & 0 & 0 \\
1 & 0 & 1
\end{pmatrix}, & \text{if } a = 0, b \neq 0, c = 0 \\[10pt]

\begin{pmatrix}
1/a & 0 & 0 \\
1/a & 0 & -1/c \\
0 & 1 & 0
\end{pmatrix}, & \text{if } a \neq 0, b = 0, c \neq 0
\end{cases}
\]

\end{document}